\documentclass[preprint,10pt]{elsarticle}

\usepackage{amssymb}
\usepackage{amsmath}
\usepackage{amsthm}
\newtheorem{theorem}{theorem}[section]

\journal{Mathematics Journal}

\begin{document}

\begin{frontmatter}



\title{A method to identify the ordinary edges for symmetric traveling salesman problem based on frequency $K_i$s}


\author{Yong Wang} 

\affiliation{organization={New Energy School, North China Electric Power University},
            addressline={No.2, Beinong Road, Huilongguan, Changping}, 
            city={Beijing},
            postcode={102206}, 
            country={China}}

\begin{abstract}
The frequency $K_i$s ($i\in[4,n]$) are studied for symmetric traveling salesman problem ($TSP$) to characterize the structure properties of the edges inside and outside the optimal Hamiltonian cycle ($OHC$). Given a $K_i$ in  $K_n$ where $i\in [4,n]$, the frequency $K_i$ is computed with the set of ${{i}\choose{2}}$ optimal $i$-vertex paths with fixed endpoints (optimal $i$-vertex paths) in the $K_i$. 
Given an $OHC$ edge in a $K_i$, it has a frequency bigger than $\frac{1}{2}{{i}\choose{2}}$ in the frequency $K_i$, and that of an ordinary edge outside the $OHC$ is smaller than $\frac{1}{2}{{i}\choose{2}}$. 
As the frequency of an edge is computed with the frequency $K_i$s, an $OHC$ edge of $K_n$ has an average frequency bigger than $\frac{1}{2}{{i}\choose{2}}$. It indicates an $OHC$ edge of $K_n$ is also one $OHC$ edge of a $K_i$ containing it. It also found that the probability that an $OHC$ edge has the frequency bigger than $\frac{1}{2}{{i}\choose{2}}$ increases according to $i\in [4, n]$ based on the frequency $K_i$s. 
For an ordinary edge outside the $OHC$, the probability that it has a frequency smaller than $\frac{1}{2}{{i}\choose{2}}$ increases according to $i$. Based on the findings, a method is given to identify the ordinary edges for $TSP$. 
\end{abstract}

\begin{graphicalabstract}
\end{graphicalabstract}

\begin{highlights}
\item The lower frequency bound for $OHC$ edges in $K_i$ is derived.
\item The sufficient and necessary conditions for $OHC$ edges in $K_i$ are given. 
\item The difference between $OHC$ and ordinary edges is demonstrated. 
\end{highlights}

\begin{keyword}
Traveling salesman problem\sep optimal Hamiltonian cycle\sep optimal $i$-vertex path with given endpoints\sep frequency $K_i$

\MSC 05C45\sep 68Q06\sep 05C90\sep 90B40
\end{keyword}

\end{frontmatter}



\section{Introduction}
\label{sec1}
Traveling Salesman Problem ($TSP$) is extensively studied in combinatorial optimization, operations research, computer science and engineering \cite{DBLP:books/Gutin07}. 
Given $K_n$ on $n$ vertices $\{1, \ldots, n\}$, there is a distance $d(u,v)> 0$ for an edge $(u,v)$ where $u \neq v \in \{1, 2, \ldots, n\}$. For the symmetric $TSP$, $d(v,u)=d(u,v)$ exists for any pair of vertices $u$, $v$. A Hamiltonian cycle ($HC$) is one cycle visiting each of all vertices exactly once. The optimal Hamiltonian cycle ($OHC$) is the shortest one among all $HC$s. Finding $OHC$ is the classic $TSP$ and it is $NP$-Complete \cite{DBLP:journals/Karp75}. The number of $HC$s is $\frac{(n-1)!}{2}$ in symmetric $K_n$. As $n$ becomes big, it is impractical to find $OHC$ by the methods based on enumeration. 

The exact algorithms usually require $O(a^n)$ time for finding $OHC$ where $a>1$. For example, the dynamic programming
consumes $O(n^22^n)$ time owing to Bellman \cite{DBLP:journals/Bellman62}, and independently Held and Karp \cite{DBLP:journals/Karp62}. 
The techniques based on cutting-plane \cite{DBLP:journals/Levine00,DBLP:journals/Applegate09} 
and branch-and-bound \cite{DBLP:journals/Carpaneto95,DBLP:journals/Klerk11} 
are able to tackle $TSP$ with thousands of vertices. In 2006, one large Euclidean $TSP$ having 85,900 nodes was resolved  \cite{DBLP:journals/Applegate09}. 
In 2019, Cook \cite{DBLP:journals/Cook19} 
reported that the $OHC$ through 109,399 stars was found. However, the exact algorithms generally consume long time for resolving the $TSP$ instances of large scale.

$TSP$ is hard to resolve because $OHC$ edges do not show special distances in $K_n$. In other words, although $OHC$ edges are different from ordinary edges, it is difficult to separate them  according to the distances on edges. In the 1920s, Menger pointed out the nearest-neighbor method is not helpful for $TSP$ even though it works well to find the minimum spanning tree ($MST$) in a weighted graph \cite{DBLP:journals/Liu08}. In contrast, many ordinary edges have some special features based on distances. Forty years ago, Jonker and Volgenant \cite{DBLP:journals/Jonker84} identified more than half ordinary edges outside $OHC$ based on 2-$opt$ $moves$. Ten years ago, Hougardy and Schroeder \cite{DBLP:journals/Hougardy14} found more ordinary edges using 3-$opt$ $moves$. Zhong \cite{DBLP:journals/Zhong18} compared the two methods when they were applied to Euclidean $TSP$, and illustrated that the method based on the 3-$opt$ $moves$ identified more ordinary edges than that according to the 2-$opt$ $moves$. 

According to the distances on edges, the properties of $OHC$ edges are not fully  demonstrated in the above studies. In real-world applications, there are selection criteria for finding candidate $OHC$ edges. To reduce the search time of heuristic algorithms, a limited number of nearest neighbors (i.e., five nearest  neighbors) to each vertex are considered in the original LK algorithm \cite{DBLP:journals/Lin73}. The LK heuristic was improved to LKH-1 in the year of 2000. One of the essential improvements is the $\alpha$-measure computed for edges based on 1-tree. It was taken as the criteria for choosing a few candidate $OHC$ edges linking each vertex \cite{DBLP:journals/Helsgaun20}. The experiments illustrated that the  $\alpha$-measure was much better than the method of nearest neighbors for choosing the candidate $OHC$ edges, and it also played well in the new version LKH-2  \cite{DBLP:journals/Helsgaun09}. Besides the $\alpha$-measure, pseudo-backbone edges were proposed for finding the candidate $OHC$ edges contained in a set of high quality tours \cite{DBLP:journals/Jäger14}. A backbone edge is originally contained in every $OHC$ and it is relaxed to one pseudo-backbone edge. The tours of high quality can be computed with the heuristic algorithms, such as LKH. 
It is interesting that the high quality tours contain many pseudo-backbone edges. Through contracting the pseudo-backbone edges, the size of $TSP$ is greatly reduced. Although the $\alpha$-measure and pseudo-backbone edges work well for most $TSP$ instances, they are lack of theoretical foundations for characterizing the $OHC$ edges and ordinary edges. 

In recent years, the frequencies of edges were computed with the optimal 4-vertex paths with given endpoints (optimal 4-vertex paths), or frequency $K_4$s for identifying $OHC$ edges \cite{DBLP:journals/Wang151, DBLP:journals/Wang16}. Given a $K_4$ on four vertices in $K_n$, there are twelve 4-vertex paths each of which visits the four vertices exactly once in the $K_4$. They include six pairs of 4-vertex paths each of which has the same two endpoints. Each pair of the 4-vertex paths are compared with respect to the distance, and the shorter one is taken as the optimal 4-vertex path for the pair of specific endpoints, respectively. Since six pairs of vertices exist in the $K_4$, there are six optimal 4-vertex paths in the $K_4$. After the six optimal 4-vertex paths were obtained, the number of the optimal 4-vertex paths containing each edge is enumerated, respectively. Each number denotes the frequency of one corresponding edge. After the frequencies of all edges are figured out, the frequency $K_4$ is constructed by the edges and their frequencies, see Appendix $A$. The frequency $K_4$s for a $K_4$ with various distances on edges have been studied in paper \cite{DBLP:journals/Wang16}. 

Given an edge in $K_n$, it is contained in ${{n-2}\choose{2}}$ $K_4$s, and it is contained in the same number of frequency $K_4$s. In each frequency $K_4$, the edge has certain frequency of 1, 3, or 5. Given an edge in $K_n$, all the frequencies related to the edge are added together for computing a total frequency in  $\left[{{n-2}\choose{2}}, 5{{n-2}\choose{i-2}}\right]$ and the average frequency in $[1,5]$. 
After that, it found that the average frequency of an $OHC$ edge is much higher than the average frequency of all edges and that of most ordinary edges. In the average case, the average frequency of an $OHC$ edge is bigger than $3$ \cite{DBLP:journals/Wang16}. Moreover, an $OHC$ edge will be contained in more percentage of the optimal 4-vertex paths according to rising $n$ \cite{DBLP:journals/Wang19}. It implies that the average frequency of an $OHC$ edge will increase according to $n$ based on the frequency $K_4$s. For big and large $TSP$, most $OHC$ edges have a very big frequency computed with the frequency $K_4$s.

The frequency $K_4$s have been investigated, and each $OHC$ edge in $K_n$ has certain (average) frequency much higher than that of most ordinary edges based on the frequency $K_4$s. The next work is to study the frequency $K_i$s ($4<i\leq n$) for $TSP$. In one frequency $K_i$, the frequency of each edge is computed with the ${{i}\choose{2}}$ optimal $i$-vertex paths contained in one  kind of $K_i$s, see Section 2. In each  frequency $K_i$, every edge has a frequency in $\left[0,{{i}\choose{2}} - 1\right]$. An edge in $K_n$ is contained in ${{n-2}\choose{i-2}}$ frequency $K_i$s. The frequency or average frequency of each edge can be computed based on these frequency $K_i$s. Similar to the frequency of edges computed based on the frequency $K_4$s, $OHC$ edges and ordinary edges will have much difference with respect to their frequencies or average frequencies computed with the  frequency $K_i$s. According to the distance on edges in $K_n$, it is difficult to separate $OHC$ edges from ordinary edges. Since $OHC$ edges and ordinary edges have much difference with respect to the frequencies computed with the frequency $K_i$s, they may be separated from each other according to their frequencies.


Different from the previous research on frequency $K_i$s for $TSP$, this paper focuses on the intrinsic difference between $OHC$ edges and ordinary edges with respect to edge frequency computed based on frequency $K_i$s. If each $K_i$ contains ${{i}\choose{2}}$ optimal $i$-vertex paths and one $OHC$ where $i\in[4,n]$, this paper presents the following  results. 
An $OHC$ edge of a $K_i$ has the frequency bigger than $\frac{1}{2}{{i}\choose{2}}$ in the corresponding frequency $K_i$, i.e., an $OHC$ edge is contained in bigger than  $\frac{1}{2}{{i}\choose{2}}$ optimal $i$-vertex paths in the $K_i$. On the other hand, an ordinary edge has the frequency smaller than $\frac{1}{2}{{i}\choose{2}}$ in the frequency $K_i$, and the expected frequency of an ordinary edge is smaller than $\frac{i+2}{2}$. On average, the expected frequency of an $OHC$ edge in the $K_i$ is bigger than $\frac{i^2-4i+7}{2}$, and an ordinary edge has the expected frequency smaller than 2. Thus, the sufficient and necessary condition for an $OHC$ edge in a $K_i$ is that the edge frequency is bigger than $\frac{1}{2}{{i}\choose{2}}$ based on the ${{i}\choose{2}}$ optimal $i$-vertex paths in the $K_i$. 

As the average frequency of an edge in $K_n$ is computed with the frequency $K_i$s, an $OHC$ edge in $K_n$ has the lower frequency bound $\frac{1}{2}{{i}\choose{2}}$. It indicates that it has a frequency bigger than $\frac{1}{2}{{i}\choose{2}}$ in each frequency $K_i$ containing it in the average case. Thus, it is the $OHC$ edge of any $K_i$ containing it. In addition, the probability that an $OHC$ edge has a frequency bigger than $\frac{1}{2}{{i}\choose{2}}$ increases according to $i$ based on the frequency $K_i$s. On the other hand, the probability that an ordinary edge has a frequency smaller than $\frac{1}{2}{{i}\choose{2}}$ increases according to $i$ based on the frequency $K_i$s. According to the findings, a method is given to find the ordinary edges for $TSP$ based on the frequency $K_i$s. 

The remainder of this paper is organized as follows. In Section 2, the optimal $i$-vertex paths contained in a $K_i$ are introduced, and the frequency $K_i$ is computed with the optimal $i$-vertex paths. In Section 3, the lower frequency bound for $OHC$ edges in a $K_i$ is proven, and the upper frequency bound for ordinary edges is also derived. 
In section 4, the change of the probability that an edge has a frequency bigger than $\frac{1}{2}{{i}\choose{2}}$ according to $i$ is studied for $OHC$ edges and ordinary edges, respectively, based on the frequency $K_i$s. Meanwhile, the method to identify ordinary edges is also presented for $TSP$. 
Finally, conclusions are drawn in the last section, and the future research is also presented. 


\section{The optimal $i$-vertex paths and frequency $K_i$}
\label{sec2}

  Given $K_n$ on $n$ vertices in  $V=\{{v_1, v_2,...,v_n}\}$ for symmetric $TSP$, each vertex $v_i$ is assigned by a natural number in $[1, n]$ where $i\in [1,n]$. For any set of vertices in $K_n$, there is a total order on the set of vertices induced by the natural ordering on ${1, 2, ..., n}$. Based on the unique natural number assigned to each vertex, all the edges, paths and $HC$s in $K_n$ are denoted by different vertex sequences.  
The $TSP$ containing one $OHC$ is considered here. There are ${{i}\choose{2}}$ edges in $K_n$, and there are $n$ $OHC$ edges. \textit{If an edge is contained in $OHC$, it is called an $OHC$ edge. Otherwise, it is an ordinary edge.}  If some $TSP$ includes several or many $OHC$s, one can add small random distances to the distances of edges, and it is converted into one modified $TSP$ having only one $OHC$ and $n$ $OHC$ edges. As the random distances are small enough, the original $TSP$ and modified $TSP$ will contain the same $OHC$. The $OHC$ in $K_n$ is illustrated in Figure \ref{OHC} where we assume that the subscripts of vertices are ordered according to the natural numbers $1, 2, ..., n$. The dashed lines are $OHC$ edges whereas the solid line represents an ordinary edge $(v_1,v_j)$ and $1<i<j<k<n$.

\begin{figure}
	\centering
	\includegraphics[width=2.5in,bb=0 0 400 200]{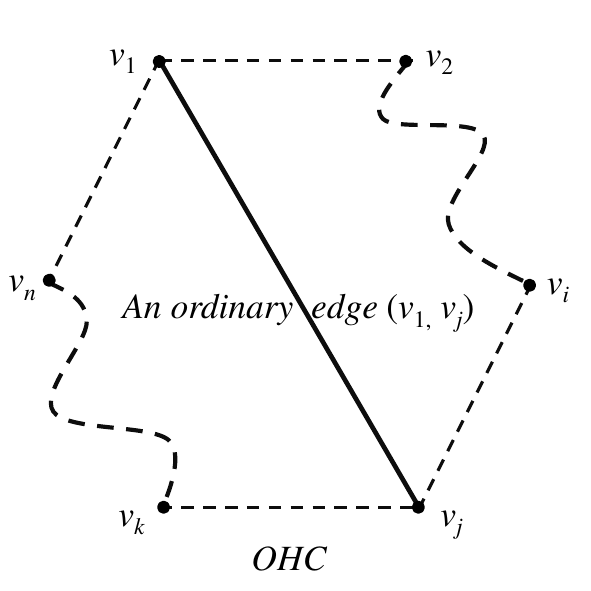}
	\caption{An ordinary edge and the two pairs of adjacent $OHC$ edges in $K_n$.}
	\label{OHC}
\end{figure}

In $K_n$, each vertex is contained in $n-1$ edges. For example, $v_1$ is contained in the $n-1$ edges $(v_1, v_x)$ where $x\in [2,n]$ in Figure \ref{OHC}. Since there is only one $OHC$ in $K_n$, each vertex is contained in two $OHC$ edges and $n-3$ ordinary edges, respectively. For example in Figure \ref{OHC}, $v_1$ is contained in the two $OHC$ edges $(v_1,v_2)$ and $(v_1, v_n)$, and the $n-3$ ordinary edges $(v_1,v_y)$ $(y\in [3,n-1])$. 

Moreover, each ordinary edge is adjacent to two pairs of $OHC$ edges on both endpoints since each vertex is contained in two $OHC$ edges. For example in Figure \ref{OHC}, the ordinary edge $(v_1,v_j)$ is adjacent to the two pairs of $OHC$ edges $(v_1,v_2)$ $\&$ $(v_1,v_n)$ and $(v_j,v_i)$ $\&$ $(v_j, v_k)$. If $v_2 = v_i$ or $v_k = v_n$ for $n >4$, there are at most $n$ such ordinary edges in $K_n$. For each of the other $\frac{n(n-5)}{2}$ ordinary edges $(v_1, v_j)$, $v_2\neq v_i$ and $v_k\neq v_n$ exist. It means that most of the ordinary edges $(v_1,v_j)$ and the two pairs of adjacent $OHC$ edges lie within a designated hexagon instead of a pentagon. In the following, for an ordinary edge $(v_1, v_j)$, the four vertices $v_2$, $v_i$, $v_k$ and $v_n$ contained in the two pairs of adjacent $OHC$ edges are pairwise distinct if there are no special declarations. 

Given a set of $i$ vertices in $K_n$ where $i\in[4,n]$, the $i$ vertices are contained in one  corresponding $K_i$. As there is only one $OHC$ in the $K_i$, every vertex is contained in two $OHC$ edges and $i-3$ ordinary edges, and an ordinary edge is adjacent to two pairs of $OHC$ edges on both endpoints, respectively. Moreover, there are ${{i}\choose{2}}$ optimal $i$-vertex paths in the $K_i$. The corresponding frequency $K_i$ is computed with the optimal $i$-vertex paths ($OP^i$) in the $K_i$ where the superscript $i$ in $OP^i$ denotes the number of vertices. The optimal 4-vertex paths and frequency $K_4$s were computed in paper \cite{DBLP:journals/Wang16}, see Appendix $A$. The optimal $i$-vertex paths in the $K_i$ and the corresponding frequency $K_i$ are computed as below. 

\begin{description}
	\item[Optimal $i$-vertex path ($OP^i$)]: Given a $K_i$ on a set of $i$ vertices $\{v_1,v_2, \ldots, v_{i-1},v_i \}$ in $K_n$ where $i\in [4,n]$, the paths $P^i = (v_{\sigma_1}, \ldots, v_{\sigma_i})$ visit each of the $i$ vertices exactly once. Fix the two endpoints of $P^i$, such as $v_{\sigma_1}=v_1$
	and $v_{\sigma_i} =v_2$, there are $(i-2)!$ $P^i$s where $v_1$ and $v_2$ are the endpoints. The shortest one is taken as the optimal $i$-vertex path denoted as $OP^i$ for $v_1$ and $v_2$. 
\end{description}
In the $K_i$, there are ${{i}\choose{2}}$ pairs of vertices which are taken as the endpoints for computing the $OP^i$s. Thus, each $K_i$ contains ${{i}\choose{2}}$ $OP^i$s. The ${{i}\choose{2}}$ $OP^i$s in the $K_i$ on vertex set $\{v_1,v_2, \ldots, v_{i-1},v_i \}$ are illustrated in Figure \ref{OPi}. Each $OP^i$ has two specified endpoints which are different from those of the other $OP^i$s. The endpoints of each $OP^i$ are actually contained in one corresponding edge in the $K_i$. Thus, the number of $OP^i$s equals to that of edges in the $K_i$. For example, the first $OP^i$ in Figure \ref{OPi} has the endpoints $v_1$ and $v_2$. In the $K_i$, $v_1$ and $v_2$ are contained in the edge $(v_1,v_2)$. In this case, $(v_1,v_2)$ will not be contained in the first $OP^i$ in Figure \ref{OPi}. Thus, an edge in the $K_i$ is excluded from at least one $OP^i$. For each vertex in the $K_i$, such as $v_1$ in Figure \ref{OPi}, it is one endpoint of $i-1$ $OP^i$s. In each of these $OP^i$s, $v_1$ is contained in one edge. In each of the other ${{i-1}\choose{2}}$ $OP^i$s, $v_1$ is one intermediate vertex, and it is contained in two edges. The edges containing $v_1$ in the $OP^i$s are one subset of the edges containing $v_1$ in the $K_i$. 
It mentions that the $OHC$ in the $K_i$ contains $i$ $OP^i$s which only contain the $OHC$ edges. On the other hand, each of the other $\frac{i(i-3)}{2}$ $OP^i$s contains at least one ordinary edge, respectively. Most importantly, each $OP^i$ contains the defined set of $i-1$ edges in the $K_i$, and each $OP^i$ does not become shorter whatever the intermediate vertices are exchanged. Thus, all paths contained in an $OP^i$ are the optimal paths having a relatively smaller number of vertices. For example, each $P^k$ ($k\in [4,i]$) contained in an $OP^i$ is one $OP^k$.  

\begin{figure}
	\centering
	\includegraphics[width=3in,bb=0 0 360 250]{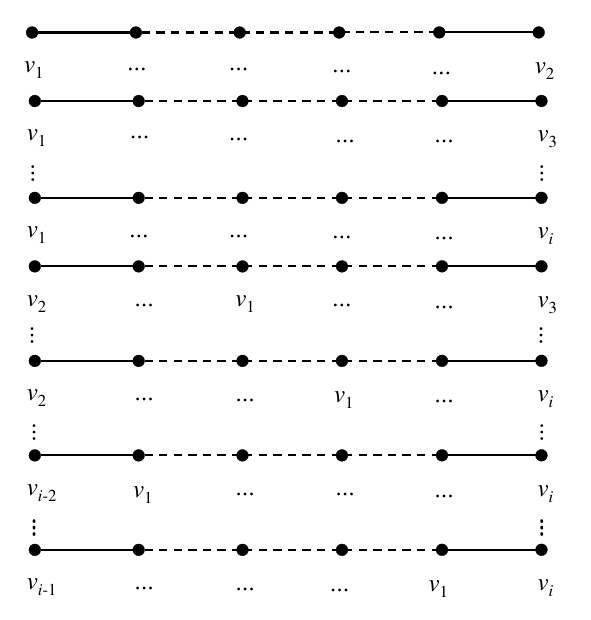}
	\caption{The ${{i}\choose{2}}$ $OP^i$s in one $K_i$ on vertex set $\{v_1,v_2,...,v_i\}$.}
	\label{OPi}
\end{figure}

In $K_n$, there are ${{n}\choose{i}}$ $K_i$s, and there are total ${{i}\choose {2}}{{n}\choose {i}}$ $OP^i$s at given number $i$.  All the $P^i$s in $OHC$ and $OP^n$s of $K_n$ are $OP^i$s contained in some $K_i$s. Moreover, every path $P^k$ contained in an $OP^m$ is one $OP^k$ where $k\in[4,m], m\in[k,n]$. If one $P^k$ becomes shorter by exchanging the intermediate vertices, the $P^m$ containing it must not be optimal. In addition, each path $P^k$ contained by two optimal paths $OP^i$ and $OP^j$ is also one $OP^k$ where $k\in [4, min\{i,j\}]$. Thus, the set of $OP^i$s and the intersection operation defined for any two optimal paths form one semi-group \cite{DBLP:journals/Wang251}. Due to the structure stability of the $OP^i$s, the $OP^i$s containing more vertices only include the $OP^i$s containing less vertices. The $OP^i$s containing the smaller number of vertices maintain the same structures when they are used to construct the $OP^i$s containing more vertices. 

The $OP^i$s in the $K_i$ have another property which does not become shorter through $k$-$opt$ $moves$ or edges replacement. Given an $OP^i$, it indicates that a pair of edges in the $OP^i$ can not be replaced by any pair of the other edges to compute another even shorter $OP^i$ in the $K_i$. Thus, each $OP^i$ will contain the pairs of $OHC$ edges in the $K_i$ as most as possible, respectively. On the other hand, each $OP^i$ contains the least number of ordinary edges in the $K_i$. As the ${{i}\choose{2}}$ $OP^i$s are obtained, the frequency $K_i$ is computed as follows. 
\begin{description}
	\item[Frequency $K_i$]: After the ${{i}\choose{2}}$ $OP^i$s are computed from a given $K_i$, one can enumerate the number of $OP^i$s containing an edge in the $K_i$. This number is called the frequency of the edge. As the frequency of each edge is computed based on these $OP^i$s, the frequency $K_i$ is constructed with the edges and their frequencies.
\end{description}
In the frequency $K_i$, the vertices and edges are the same as those in the corresponding $K_i$. In contrast, the distances of edges in the $K_i$ are replaced by the corresponding frequencies of the edges, see the frequency $K_4$s in Appendix $A$. Thus, one frequency $K_i$ and the corresponding $K_i$ contain the same $OHC$ and $OP^i$s. Different from the various distances of edges in the $K_i$, each edge has a specific frequency in $\left[0,{{i}\choose{2}}-1\right]$. If an edge is not contained in any one $OP^i$, it has the frequency of zero. Since an edge in the $K_i$ is excluded from at least one $OP^i$, the maximum frequency of edges is ${{i}\choose{2}} - 1$. Moreover, an $OHC$ edge has certain  frequency much higher than that of an ordinary edge in the frequency $K_i$. There are ${{i}\choose{2}}$ $OP^i$s in the $K_i$ and each $OP^i$ contains $i-1$ edges. Since the frequency $K_i$ contains ${{i}\choose{2}}$ edges, the average frequency of all edges is $i-1$. Given a vertex in the $K_i$, there are $i-1$ $OP^i$s in which it is one endpoint. In each of these $OP^i$s, the vertex is contained in one edge. In each of the other ${{i-1}\choose{2}}$ $OP^i$s, the vertex is one intermediate vertex, and it is contained in two edges. Thus, the total frequency of the $i-1$ edges containing the vertex is $(i-1)^2$. This indicates that the average frequency of the edges containing a vertex is also $i-1$. The distances on the $i-1$ edges containing a vertex do not have such properties. As the six frequency $K_4$s for a $K_4$ with various distances on the edges, the number of frequency $K_i$s are also enumerable.  

When one special $K_i$ contains many equal-weight edges, it may contain more than ${{i}\choose{2}}$ $OP^i$s and one $OHC$. It is difficult to choose a set of ${{i}\choose{2}}$ $OP^i$s from a lot of $OP^i$s to compute a unique frequency $K_i$. In this case, one will see that an edge may have various frequency based on different sets of $OP^i$s. Moreover, the frequency of ordinary edges and $OHC$ edges will have the same frequency according to certain set of $OP^i$s. Since we assume the $OHC$ edges are different from ordinary edges, they will have different structure characteristics to compute the $OP^i$s, and they are contained in different number of $OP^i$s. In other words, they should have different frequency based on the $OP^is$ in the $K_i$. However, the equal-weights make the $OHC$ edges and ordinary edges have no difference based on the $OP^i$s. Since we assume there is one $OHC$, the $OHC$ edges are definitely different from ordinary edges. It is the set of ${{i}\choose{2}}$ $OP^i$s that may discriminate the $OHC$ edges from ordinary edges in each $K_i$. If there are many equal-weight edges in a $K_i$, it must be converted into one $K_i$ containing only ${{i}\choose{2}}$ $OP^i$s. Then, the $OHC$ edges and ordinary edges may be identified based on their frequencies. One can add a set of small random distances to the distances of edges. As the random distances are sufficiently small, there exists at least one $OHC$ that remains unchanged. In this case, the modified $K_i$ will contain one $OHC$ and ${{i}\choose{2}}$ $OP^i$s. 

An edge $e$ in $K_n$ is contained in ${{n-2}\choose{i-2}}$ $K_i$s, and it is contained in the same number of frequency $K_i$s. In each frequency $K_i$, $e$ has certain frequency in  $\left[0,{{i}\choose{2}}-1\right]$. The (total) frequency and average frequency of $e$ is computed with these frequency $K_i$s.
\begin{description}
	\item[The (total) frequency and average frequency of an edge in $K_n$]:  For an edge $e$ in $K_n$, it is contained in $L_i={{n-2}\choose{i-2}}$ frequency $K_i$s. Provided that the frequency of $e$ is $f_k(e) (k\in [1,L_i])$ in the $k^{th}$ frequency $K_i$, the (total) frequency of $e$ is computed as $F(e) = \sum_{k=1}^{L_i} f_k(e)$ and the expected or average frequency of $e$ is  $f(e)=\frac{F(e)}{L_i}\in \left[0,{{i}\choose{2}}-1\right]$. 
\end{description}

The frequency or average frequency of $OHC$ edges has much difference from that of ordinary edges as they are computed with the frequency $K_i$s where $i\in[4,n]$. The frequency of an $OHC$ edge is much bigger than that of most ordinary edges computed based on frequency $K_4$s \cite{DBLP:journals/Wang19}. The $OHC$ edges and most ordinary edges can be separated from each other according to edge frequency. Besides the frequency, the probability that an edge is contained in the $OP^i$s also illustrates the structure characteristics of the edges with respect to $OHC$. In other words, an $OHC$ edge will be contained in the $OP^i$s with the big probability, whereas most ordinary edges are contained in the $OP^i$s with a small probability. 

\begin{description}
	\item[The probability that an edge is contained in the $OP^i$s]:  For an edge $e$ in $K_n$,  the average frequency of $e$ is 
	$f(e)\in \left[0,{{i}\choose{2}}-1\right]$ based on the frequency $K_i$s containing it. The expected probability that $e$ is contained in the $OP^i$s (or an $OP^i$) is computed as $p_i(e\in OP^i) = \frac{f(e)}{{{i}\choose{2}}}\in [0,1)$. If the $OP^i$ having the same endpoints as those of $e$ is neglected, $p_i(e\in OP^i) = \frac{f(e)}{{{i}\choose{2}} - 1}\in [0,1]$. 
\end{description}

For convenience, the expected probability is simply called probability in this paper. In view of the definitions of average frequency and probability for an edge, one will see that they are depends on the distances of all edges in $K_n$. Once the weighted $K_n$ for a $TSP$ instance is given, every edge has the unique average frequency and probability, respectively. If one takes the distance of an edge as the real function defined on the edge, the average frequency (or probability) is a functional defined on the function space related to the edge distances. Compared with the edge distances, the frequency or probability for an edge will exhibit the even stronger capability in characterizing the graph structure in terms of $OHC$. Therefore, we can build the sufficient and necessary conditions for $OHC$ edges and ordinary edges based on their frequencies and probabilities rather than distances. Then, the strategies or techniques based on these theories can be constructed to lead the algorithms to search for the $OHC$. 

In the following, as a $K_i$ ($i\in [4,n]$) or $K_i$s and frequency $K_i$ or frequency $K_i$s are discussed, they are totally contained in one $K_n$ for a given $TSP$ instance, and the distance of every edge is defined in the same space, such as one kind of Euclidean space, or metric space, etc. Once the $TSP$ instance is given, the $OP^i$s in each $K_i$ are determined according to the vertices representation and the distances on edges in the $K_i$, respectively. Meanwhile, the $OP^i$s containing each edge are also fixed according to the $K_n$. It means that a given edge $e$ has the specific $f(e)$ and $p_i(e\in OP^i)$ for the $TSP$.

\section{The lower frequency bound for $OHC$ edges and upper frequency bound for ordinary edges}
\label{sec3}
We assume that each $K_i$ $(i\in [4,n])$ contains one $OHC$ and ${{i}\choose{2}}$ $OP^i$s. Given a $K_i$, each $OP^i$ in the $K_i$ is unique with respect to the two  endpoints. Thus, each $OP^i$ in the $K_i$ contains the defined set of $i-1$ edges, respectively. Some edges are contained in many $OP^i$s, some edges are contained in a small number of $OP^i$s, and the remainder edges exclude from any one $OP^i$. Thus, not all edges in the $K_i$ are contained in the $OP^i$s, and the frequencies of  edges are different in the frequency $K_i$. Given a frequency $K_i$, the frequency of an $OHC$ edge is concerned. Moreover, as the average frequency and probability of an edge in $K_n$ are computed based on frequency $K_i$s, the difference between the average frequency and probability of $OHC$ edges and those of ordinary edges are also regarded. In the following, we first study the average frequency and probability for an $OHC$ edge in $K_n$ based on frequency $K_4$s. Given a $K_4$ on four vertices $A$, $B$, $C$, and $D$ in $K_n$, we assume the order to be $A<B<C<D$. For example, the $K_4$ on $A$, $B$, $C$, and $D$ is shown in Figure \ref{quadrilaterals} (a) where the distances on edges are not shown.

\begin{theorem}
	\label{th001}
	Given an edge $e\in OHC$ in $K_n$, the average frequency $f(e)\geq 3$ and the probability $p_4(e\in OP^4)\geq \frac{1}{2}$ hold based on the frequency $K_4$s. 
\end{theorem}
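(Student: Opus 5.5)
The plan is first to classify completely the frequency $K_4$s, and then to average over the ${{n-2}\choose{2}}$ $K_4$s that contain $e$.

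\emph{Step 1: structure of a frequency $K_4$.} Take a $K_4$ on $A<B<C<D$. Its six edges split into three perfect matchings $M_1=\{(A,B),(C,D)\}$, $M_2=\{(A,C),(B,D)\}$, $M_3=\{(A,D),(B,C)\}$, with weights $s_1,s_2,s_3$. For the endpoints $A,B$ the two candidate paths are $A\!-\!C\!-\!D\!-\!B$ and $A\!-\!D\!-\!C\!-\!B$. Both contain $(C,D)$, and they differ by $s_2$ versus $s_3$. So the optimal 4-vertex path for an endpoint pair lying in matching $M_a$ consists of the partner edge of that pair in $M_a$, together with the cheaper of the two other matchings.

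After perturbation, relabel so that $s_1<s_2<s_3$. Count the frequency of an edge $x\in M_1$. It is the middle edge of one $OP^4$, namely the one whose endpoints form its partner. It is also a matching edge of the four $OP^4$s whose endpoint pairs lie in $M_2\cup M_3$. Its frequency is therefore $5$. An edge of $M_2$ gets $1+2=3$, and an edge of $M_3$ gets $1$. So every frequency $K_4$ has the pattern $5,5,3,3,1,1$. Its $OHC$ is the 4-cycle $M_1\cup M_2$, which avoids the heaviest matching. Hence an edge has frequency $\ge 3$ in a $K_4$ exactly when it is an $OHC$ edge of that $K_4$. This matches the Appendix $A$ pictures.

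\emph{Step 2: the $K_4$s that are certainly good.} Let $e=(a,b)$ be an $OHC$ edge of $K_n$, and let $f=(c,d)$ be another $OHC$ edge, oriented so that the tour reads $a,b,\ldots,c,d,\ldots$. The 2-opt inequality for the $OHC$ gives $d(a,b)+d(c,d)\le d(a,c)+d(b,d)$. So the matching $\{e,f\}$ is not heavier than $\{(a,c),(b,d)\}$, and therefore $e\notin M_3$ of this $K_4$. Consequently $e$ has frequency $\ge 3$ in every $K_4$ that contains a second, nonadjacent $OHC$ edge.

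\emph{Step 3: averaging (main obstacle).} The remaining $K_4$s $\{a,b,x,y\}$ may give $e$ frequency $1$. I would try to compensate each such bad $K_4$ by a $K_4$ in which $e$ has frequency $5$, through an injective exchange map. Replace $x$ or $y$ by a tour neighbour of $x$ or $y$. Then use the 2-opt and 3-opt optimality of the $OHC$ to show that the inequality $d(a,b)+d(x,y)\ge \max$ of the other two matchings forces $e$ into the lightest matching of the image $K_4$. If such an injection exists, the frequencies $1$ and $5$ pair off to an average of $3$. The good $K_4$s of Step 2 all contribute at least $3$, so $F(e)\ge 3{{n-2}\choose{2}}$, that is, $f(e)\ge 3$.

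Finally, the probability bound $p_4(e\in OP^4)=f(e)/{{4}\choose{2}}\ge 3/6=\frac12$ follows immediately. Constructing this injection is the step I expect to be the hard one. If no clean injection exists, I would fall back on a weaker average-case argument: bound the number of bad $K_4$s by a double-counting over the pairs $(x,y)$ that violate the 2-opt inequality relative to $e$.
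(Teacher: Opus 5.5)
\emph{Verdict: the proposal has a genuine gap at Step 3, and it cannot be filled, because the claimed bound $f(e)\ge 3$ is false for general instances.}

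Your Steps 1 and 2 are correct. They coincide with the rigorous part of the paper's proof: the six frequency $K_4$s with pattern $5,5,3,3,1,1$, and $S_1<S_3$ for the $n-3$ $K_4$s spanned by $e$ and a vertex-disjoint $OHC$ edge. Step 3, however, cannot be carried out. Neither the injection nor the double-counting bound exists. Here is a counterexample.

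Take $n\ge 7$, vertices $a,b,v_1,\dots,v_{n-2}$, and tour $T=(a,b,v_1,\dots,v_{n-2},a)$. Give the edges lengths as follows:
\begin{itemize}
\item the $n$ edges of $T$ get length $1$;
\item the non-tour edges at $a$ or $b$ get length $1+\varepsilon$, with $0<\varepsilon<\frac12$;
\item the non-tour edges among the $v_j$ get length $2$.
\end{itemize}
All lengths lie in $[1,2]$, so the instance is even metric. $T$ is the unique $OHC$, because every other Hamiltonian cycle uses an edge longer than $1$. A tiny generic perturbation makes all $OP^4$s unique without affecting the strict inequalities below.

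Now fix $e=(a,b)$ and consider the $K_4$ on $\{a,b,v_j,v_k\}$ with $j<k$.
\begin{itemize}
\item If $k=j+1$, then $S_1=2$ while both other matching sums exceed $2$, so $e$ has frequency $5$.
\item If $k\ge j+2$, then $(v_j,v_k)$ is not a tour edge and $S_1=3>2+2\varepsilon$. So $e$ lies in the heaviest matching and has frequency $1$.
\end{itemize}
Hence $F(e)=5(n-3)+\frac{(n-3)(n-4)}{2}$, that is, $f(e)=\frac{n+6}{n-2}=1+\frac{8}{n-2}$. This is below $3$ for every $n\ge 7$ (for example $2.6$ at $n=7$) and tends to $1$. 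Equivalently, $p_4(e\in OP^4)<\frac12$.

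This example defeats each version of your Step 3:
\begin{itemize}
\item The $\frac{(n-3)(n-4)}{2}$ bad $K_4$s outnumber the $n-3$ frequency-$5$ ones once $n\ge 7$, so no exchange map from bad to frequency-$5$ $K_4$s can be injective.
\item $T$ is globally optimal, so no $2$-opt or $3$-opt argument can force $e$ out of the heaviest matching when $(x,y)$ is an ordinary edge.
\item The $2$-opt inequality only constrains pairs $(x,y)$ that are $OHC$ edges, so your fallback double count has nothing to bound.
\end{itemize}

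The paper's own proof does not close this gap either. After the same Steps 1--2, it handles the remaining $\binom{n-3}{2}$ $K_4$s through the weighted-average identity (2),
$p_{\{3,5\}}=(1-\frac{2}{n-2})q_{\{3,5\}}+\frac{2}{n-2}r_{\{3,5\}}$ with $r_{\{3,5\}}=1$. The approximation $p_{\{3,5\}}\approx q_{\{3,5\}}$ is trivially true, only because the $n-3$ special $K_4$s are a vanishing fraction. The paper then treats it as an exact equality to conclude $q_{\{3,5\}}=1$. That is an ``average case'' assumption, not a deduction, and the example above violates it.

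What your Step 2 does establish rigorously is $F(e)\ge 3(n-3)+\frac{(n-3)(n-4)}{2}$, i.e. $f(e)\ge 1+\frac{4}{n-2}$. The example shows that no general lower bound staying away from $1$ as $n$ grows can hold. So the statement can only be recovered under an extra probabilistic model on instances, which the paper's ``average case'' language tacitly supplies. It does not hold as a theorem about every instance.
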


\begin{proof}
	If $i=4$, the six frequency $K_4$s for a $K_4$ with various distances on edges were studied in  \cite{DBLP:journals/Wang16}.  For convenience, a frequency $K_4$ on four vertices $A$, $B$, $C$ and $D$ is also called frequency $ABCD$. The six frequency $ABCD$s for an $ABCD$ are illustrated in Figure \ref{quadrilaterals}.
	
	Given a $K_4$ on four vertices $A$, $B$, $C$ and $D$ in $K_n$, there are six edges $(A,B)$, $(A,C)$, $(A,D)$, $(B,C)$, $(B,D)$ and $(C,D)$. The distances of the six edges are noted as $d(A,B)$, $d(A,C)$, $d(A,D)$, $d(B,C)$, $d(B,D)$ and $d(C,D)$, respectively. Moreover, the $K_4$ contains six $OP^4$s. It is known that the six $OP^4$s in $ABCD$ are determined by the order of the three distance sums $S_1 = d(A,B) + d(C,D)$, $S_2 = d(A,D) + d(B,C)$, and $S_3 = d(A,C) + d(B,D)$ \cite{DBLP:journals/Wang16}. For example, if $S_1 < S_2 < S_3$, the six $OP^4$s in $ABCD$ are derived as $(A,B,C,D)$, $(A,D,D,A)$, $(C,D,A,B)$, $(D,A,B,C)$, $(A,B,D,C)$, and $(B,A,C,D)$. The frequency $K_4$ is computed as shown in Figure \ref{quadrilaterals} (a). For $S_1$, $S_2$ and $S_3$, there are six orderings. One set of six $OP^4$s is computed based on each of the six orderings, respectively. Thus, there are six sets of six $OP^4$s which are used to compute six frequency $K_4$s shown in Figure \ref{quadrilaterals} (a)$\sim$(f) where the order of $S_1$, $S_2$ and $S_3$ in the corresponding $K_4$ is also illustrated under each frequency $K_4$, respectively. Although the distances of the six edges in $ABCD$ have infinite combinations, $S_1$, $S_2$ and $S_3$ only have six orderings. Thus, there are only six frequency $ABCD$s for all weighted $ABCD$s. Given a $K_4$ in $K_n$, the corresponding frequency $K_4$ may be any one of the six cases in Figure \ref{quadrilaterals} if the edge distances are not given. 
	
	Given an edge in a frequency $K_4=ABCD$, such as $(A,B)$, it has the frequency 1, 3, or 5. As $(A,B)$ is the $OHC$ edge in a $K_4$, it has a frequency 3 or 5 rather than 1 in the corresponding frequency $K_4$. Based on the six frequency $K_4$s in \ref{quadrilaterals} (a)$\sim$(f), $(A,B)$ has the frequency 5, 5, 3, 3, 1, and 1, respectively. Moreover, $(A,B)$ has each of the frequencies twice, respectively. Let $p_1(e)$, $p_3(e)$, and $p_5(e)$ denote the probability that $e=(A,B)$ has the frequency 1, 3, and 5, $p_1(e)=p_3(e)=p_5(e) = \frac{1}{3}$ holds based on the six frequency $K_4$s. 
	
	For an edge in a pair of $K_4$ and frequency $K_4$, the frequency and the related distance sum, form a one-to-one mapping relationship. For example $(A,B)$, if it has the biggest frequency 5 in a frequency $ABCD$, $S_1 < S_2$ and $S_1 < S_3$ must hold in the corresponding $ABCD$ and vice versa. On the other hand, if $S_1 > S_2$ and $S_3$ in $ABCD$, $(A,B)$ must have the smallest frequency 1 in the corresponding frequency $ABCD$ and vice versa. This bijection between the frequency of an edge in a frequency $K_4$ and the related distance sum in the corresponding $K_4$ will be used to derive a lower frequency bound for an $OHC$ edge in $K_n$ computed based on frequency $K_4$s. 
	
	The $OHC$ of $K_n$ is illustrated in Figure \ref{OHCKn} where $(A,B)$ is an $OHC$ edge, and $(C,D)\in OHC$ is one vertex-disjoint edge of $(A,B)$. The four vertices $A$, $B$, $C$ and $D$ are contained in one $ABCD$. Because $(A,B)$ and $(C,D)$ are two $OHC$ edges and  there is only one $OHC$ in $K_n$, $d(A,B) + d(C,D) < d(A,C) + d(B,D)$ must hold, i.e., $S_1 < S_3$. Otherwise, $(A,B)$ and $(C,D)$ will be replaced by $(A,C)$ and $(B,D)$ to compute another $HC$ even shorter than the $OHC$. Since $S_1 < S_3$ exists in $ABCD$, there are three frequency $K_4$s (a), (b) and (c) in Figure \ref{quadrilaterals}. In the three frequency $K_4$s (a), (b) and (c) in Figure \ref{quadrilaterals}, $(A,B)$ has the frequency 5, 5 and 3, respectively. It indicates that $(A,B)$ is contained in 5, 5, and 3 $OP^4$s in the three $K_4$s, respectively. It means that $e=(A,B)$ is contained in at least three $OP^4$s in the $ABCD$. Thus, $f(A,B) \geq 3$ and $p_4(e\in OP^4) \geq \frac{1}{2}$ hold based on the three frequency $K_4$s. 
	
	Under the distance sum inequality $S_1 < S_3$, the frequency $K_4$ for $ABCD$ may be any one of the three cases (a), (b) and (c) in Figure \ref{quadrilaterals} due to the variations of edge distances. Provided that the three frequency $K_4$s occur with the equal probability for $(A,B)$, $p_1(e) = 0$, $p_3(e) = \frac{1}{3}$, and $p_5(e) = \frac{2}{3}$ exist. In the average case, $f(A,B) = \frac{13}{3} > 3$ and $p(e\in OP^4) = \frac{13}{18} > \frac{1}{2}$ hold based on the three frequency $K_4$s (a), (b) and (c) in Figure \ref{quadrilaterals}. 
	
	Moreover, there are $n-3$ vertex-disjoint edges $(C,D)$ inside the $OHC$ for $(A,B)$. $S_1 < S_3$ holds in each of the $ABCD$s. Let a random variable $X$ representing the number of frequency $ABCD$s where $(A,B)$ has the frequency 3. As $X = k$ for $k\in [0,n-3]$, the probability conforms to the binomial distribution (\ref{eq1}). $P(X=k)$ monotone increases according to $k$ from $0$ to $m_0 = \lfloor \frac{n-2}{3} \rfloor$ or $\lceil \frac{n-2}{3} \rceil - 1$ and then it monotone decreases according to $k > m_0$. At $k = m_0$, $P(X=k)$ reaches the maximum probability. It indicates that the case that there are  $m_0$ frequency $K_4$s where $(A,B)$ has the frequency 3 and $n - 3 - m_0$ frequency $K_4$s where $(A,B)$ has the frequency 5 has the maximum probability. In this case, the average frequency $f(A,B) = \frac{13}{3}$ and $p_4(e\in OP^4) = \frac{13}{18}$ exist based on the $n-3$ frequency $K_4$s. In the worst case, $(A,B)$ has the frequency 3 in each of the $n-3$ frequency $K_4$s. The average frequency $f(A,B) = 3$ and the probability $p_4(e\in OP^4) = \frac{1}{2}$ hold based on the $n-3$ frequency $K_4$s. However, based on the binomial distribution (\ref{eq1}), the probability is $P(X=n-3) = \left(\frac{1}{3}\right)^{n-3}$ which approaches zero as $n$ is big enough. 

\begin{equation}
	P(X = k) = {{n-3}\choose{k}}(p_3(e))^k(1 - p_3(e))^{n-3-k} = {{n-3}\choose{k}}\left(\frac{1}{3}\right)^k\left(\frac{2}{3}\right)^{n-3-k}
	\label{eq1}
\end{equation} 
	
	Besides the $n-3$ $ABCD$s containing $(A,B)$, there are the other ${{n-3}\choose{2}}$ $ABCD$s containing $(A,B)$ in $K_n$. Let $N_p = {{n-2}\choose{2}}$, $N_q = {{n-3}\choose{2}} $, and $N_r = n-3$, and let $p_{\{3,5\}}(e) = p_3(e) + p_5(e)$, $q_{\{3,5\}}(e) = p_3(e) + p_5(e)$ and $r_{\{3,5\}}(e) = p_3(e) + p_5(e)$ denote the probability that $(A,B)$ has the frequency 3 and 5 with respect to the $N_p$, $N_q$ and $N_r$ frequency $K_4$s, respectively. According to the $n-3$ vertex-disjoint edges $(C,D)$ inside the $OHC$ for $(A,B)$, $(A,B)$ and $(C,D)$ are contained in the $N_r = n-3$ frequency $ABCD$s. Moreover, $(A,B)$ has the frequency 3 or 5 in each of the frequency $ABCD$s, respectively. Thus, $r_{\{3,5\}}(e) = 1$ exists based on the $N_r$ frequency $ABCD$s. In the $N_p$, $N_q$ and $N_r$ frequency $K_4$s, the number of frequency $K_4$s where $(A,B)$ has the frequency 3 and 5 are $M_p = N_pp_{\{3,5\}}(e)$, $M_q = N_qq_{\{3,5\}}(e)$, and $M_r = N_rr_{\{3,5\}}(e)$, respectively. Since $M_p = M_q + M_r$, the formula (\ref{eq2}) is derived. Because $r_{\{3,5\}}(e) = 1$, $p_{\{3,5\}} \neq 0$ holds. As $n$ is sufficiently big, $\frac{2}{n-2}$ tends to zero, and $p_{\{3,5\}}(e) =  q_{\{3,5\}}(e)$ will hold for $r_{\{3,5\}}(e) \leq 1$. As $p_{\{3,5\}}(e) =  q_{\{3,5\}}(e)$ is considered, $p_{\{3,5\}}(e) =  q_{\{3,5\}}(e) = r_{\{3,5\}} = 1$ is derived. It indicates that $(A,B)$ has the frequency 3 or 5 in each frequency $ABCD$ in $K_n$ on average. 
	
	\begin{equation}
		p_{\{3,5\}}(e) =   \left[1 - \frac{2}{n-2}\right]\times q_{\{3,5\}}(e) + \frac{2}{n-2}\times r_{\{3,5\}}(e)
		\label{eq2}
	\end{equation}
	
	As an edge has the frequency 3 or 5 in a frequency $K_4$, it is the $OHC$ edge of the $K_4$. Thus, $(A,B)$ is the $OHC$ edge of any $K_4$ containing it. The optimality of $(A,B)$ is preserved to all the sub-graphs $K_4$ containing it. As the probability $p_5(e)$, $q_5(e)$ and $r_5(e)$ that $(A,B)$ has the frequency 5 are considered with respect to the $N_p$, $N_q$ and $N_r$ frequency $K_4$s, $p_5(e) = q_5(e) = r_5(e)$ can be proven according to the same proof procedure. In the average case, $p_5(e) = q_5(e) = r_5(e) = \frac{2}{3}$ holds for $(A,B)$. Thus, the average frequency $f(A,B) = \frac{13}{3}$ is computed based on all the frequency $K_4$s containing $(A,B)$, and the probability $p(e\in OP^4) = \frac{13}{18}$. In the worst case, $(A,B)$ has the frequency 3 in each of the $N_p$ frequency $K_4$s. The average frequency $f(A,B) = 3$ and the probability $p_4(e\in OP^4) = \frac{1}{2}$. However, the probability $P(X = N_p) = (\frac{1}{3})^{N_p}$ tends to zero as $n$ is big enough based on the binomial distribution (\ref{eq1}). 
\end{proof}

\begin{figure}
	\centering
	\includegraphics[width=2in,bb=0 0 300 100]{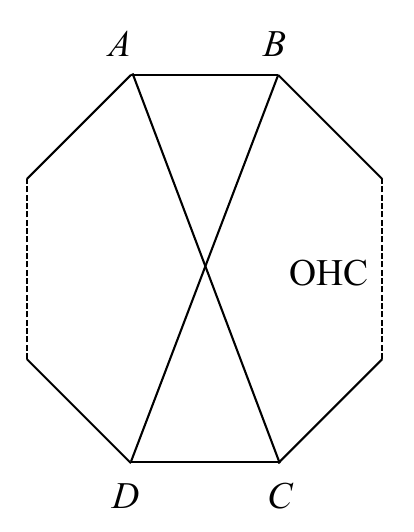}
	\caption{An edge $(A,B)\in OHC$ in $K_n$ and the $n-3$ vertex-disjoint edges $(C,D)\in OHC$.}
	\label{OHCKn}
\end{figure}
	
	Theorem \ref{th001} indicates that an $OHC$ edge in $K_n$ is also one $OHC$ edge in a $K_4$ containing it based on the average frequency and probability computed with frequency $K_4$s. For an ordinary edge $g=(A,C)$ in $K_n$, it is contained in $n-3$ $ABCD$s with an adjacent $OHC$ edge $e=(A,B)$. In each of these corresponding frequency $K_4$s, $(A,B)$ has the frequency 3 or 5. Based on the bijection between the frequency of $(A,B)$ in a frequency $K_4$ and the distance sum in the corresponding $K_4$, $S_1 < S_2$ or $S_1 < S_3$ holds in the $K_4$. If $S_1 < S_2$, there are three frequency $K_4$s (a), (b) and (e) in Figure \ref{quadrilaterals}. If $S_1 < S_3$, the three frequency $K_4$s are (a), (b) and (c) in Figure \ref{quadrilaterals}. If $S_1 < S_2$ and $S_1 < S_3$ happen with the equal probability, $(A,C)$ has the frequency 1 three times, 3 twice, and 5 once based on the six frequency $K_4$s. Thus, $p_1(e) = \frac{1}{2}$, $p_3(e) = \frac{1}{3}$ and $p_5(e) = \frac{1}{6}$ holds for $(A,C)$ based on the six frequency $K_4$s. $(A,C)$ has four adjacent $OHC$ edges $(A,B)$ in $K_n$. Thus, there are $4(n-4)$ frequency $K_4$s where the probability $p_1(e) = \frac{1}{2}$, $p_3(e) = \frac{1}{3}$ and $p_5(e) = \frac{1}{6}$ holds for $(A,C)$. One can use the similar proof procedure used in Theorem \ref{th001} to prove that $p_1(e) = \frac{1}{2}$, $p_3(e) = \frac{1}{3}$ and $p_5(e) = \frac{1}{6}$ holds for $(A,C)$ based on all frequency $K_4$s containing $(A,C)$. In this case, the average frequency $f(A,C) = \frac{7}{3}$ which is smaller than the average frequency 3 of all edges in $K_n$, and the probability $p(e\in OP^4) = \frac{7}{18} < \frac{1}{2}$ exists in the average case. The average frequency and probability for an ordinary edge in the average case is still smaller than that of an $OHC$ edge in the worst case, respectively.
	
	In the next, we shall study the frequency of an edge $e\in OHC$ in $K_n$ based on frequency $K_i$s where $i\in[4,n]$. As $i \geq 5$, $p_i(e\in OP^i)$ is noted as $p_i(e)$ for convenience. If $i$ is big, the number of frequency $K_i$s becomes large. It is time-consuming to derive all the frequency $K_i$s for all kinds of $K_i$s with various distances on edges. Here, the frequency of an $OHC$ edge in $K_i$ is analyzed based on the structure relationships among edges, $OP^i$s and $OHC$. Firstly, the average frequency of $e$ in a frequency $K_i$ is studied for the average case. 

\begin{theorem}
	\label{th01}
	Given a $K_i$ containing one $OHC$ and ${{i}\choose{2}}$ $OP^i$s where $i\in[4,n]$, an $OHC$ edge is contained in more than $\frac{2(i-1)^2}{5}$ $OP^i$s, whereas an ordinary edge is contained in at most $\frac{(i-1)^2}{5(i-3)}$ $OP^i$s in the average case. 
\end{theorem}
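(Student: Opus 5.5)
The plan is a double counting argument on the total frequency of the frequency $K_i$. The total is then split between the $i$ $OHC$ edges and the $\frac{i(i-3)}{2}$ ordinary edges.

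\textbf{Total frequency and the target share.} Each of the ${{i}\choose{2}}$ $OP^i$s contains $i-1$ edges, so the frequencies in the frequency $K_i$ sum to $T=(i-1){{i}\choose{2}}=\frac{i(i-1)^2}{2}$. The same total is obtained vertex by vertex, since the $i-1$ edges at any vertex have total frequency $(i-1)^2$ (Section 2). Write $T=T_{OHC}+T_{ord}$ for the contributions of $OHC$ edges and ordinary edges. The two bounds in the statement are then exactly the averages under the split
\[
T_{OHC} > \tfrac{4}{5}T, \qquad T_{ord}\le \tfrac{1}{5}T,
\]
because $\frac{4}{5}T/i=\frac{2(i-1)^2}{5}$ and $\frac{1}{5}T\big/\frac{i(i-3)}{2}=\frac{(i-1)^2}{5(i-3)}$. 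So it suffices to show that, in the average case, ordinary edges account for at most one fifth of all edge occurrences in the $OP^i$s. The per-vertex form says the same thing: at each vertex, the two $OHC$ edges carry more than $\frac{4}{5}(i-1)^2$ of the incidences.

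\textbf{Counting ordinary occurrences by endpoint type.} I would sort the $OP^i$s by their endpoints $u,v$.
\begin{itemize}
\item If $(u,v)$ is an $OHC$ edge, the $OP^i$ is the $OHC$ minus $(u,v)$. This gives $i$ paths containing no ordinary edge at all.
\item If $(u,v)$ is ordinary, there are $\frac{i(i-3)}{2}$ such paths. For each I would argue that, in the average case, the $OP^i$ is obtained from the $OHC$ by a single exchange: delete one $OHC$ edge at $u$ and one at $v$ on the same side, and reconnect with one ordinary edge.
\end{itemize}
The single-exchange claim rests on two facts from Section 2. First, every sub-path $P^4$ of an $OP^i$ is an $OP^4$. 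Second, by Theorem \ref{th001} an $OHC$ edge of $K_n$ is, on average, an $OHC$ edge of each $K_4$ containing it. Together these say that an $OP^i$ cannot profitably replace a pair of $OHC$ edges by a pair of ordinary edges, so the typical $OP^i$ contains one ordinary edge. This gives $T_{ord}\approx\frac{i(i-3)}{2}$ and hence
\[
\frac{T_{ord}}{T}\approx\frac{i-3}{(i-1)^2}.
\]
For $i\ge 4$ this ratio is maximized at $i=5$ with value $\frac{1}{8}<\frac{1}{5}$.

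\textbf{Margin and conclusion.} The gap between $\frac{1}{8}$ and $\frac{1}{5}$ is the room for $OP^i$s that contain two or more ordinary edges. What has to be shown is that the average number of ordinary edges per $OP^i$ with ordinary endpoints is at most $\frac{(i-1)^2}{5(i-3)}$. The $K_4$ data in Theorem \ref{th001} and the discussion after it suit this step: an ordinary edge has average $K_4$ frequency $\frac{7}{3}$, against at least $3$ for $OHC$ edges. These values should bound the expected number of extra ordinary edges contributed by each local four-vertex window of an $OP^i$. Dividing $\frac{4}{5}T$ among the $i$ $OHC$ edges and $\frac{1}{5}T$ among the ordinary edges then gives the two bounds in the statement, with the inequality strict on the $OHC$ side.

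\textbf{Main obstacle.} The hard step is the single-exchange claim and its averaging. It requires passing from the local fact that 4-vertex sub-paths are optimal to a global bound on how many ordinary edges an $OP^i$ contains. My first attempt would be the following. Assume an $OP^i$ contains two ordinary edges, and locate a 4-vertex window in which one of them sits opposite a vertex-disjoint $OHC$ edge. There the distance-sum inequality $S_1<S_3$ from the proof of Theorem \ref{th001} should give a 2-opt improvement, which contradicts optimality unless the configuration is one of the low-probability cases. I would then bound the probability of those cases using the distribution $p_1,p_3,p_5$ computed for ordinary edges after Theorem \ref{th001}.
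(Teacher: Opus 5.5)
\textbf{There is a genuine gap: the reduction is correct, but the step that carries the theorem is left unproved, and the claim you propose for it is false in general.}

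With $T=(i-1){{i}\choose{2}}$, the two bounds are exactly the class averages under the split $T_{OHC}>\frac{4}{5}T$, $T_{ord}\le\frac{1}{5}T$. The $i$ optimal paths whose endpoints span an $OHC$ edge are rigorously the $OHC$ minus that edge: if $P$ joins $u$ and $v$, then $P+(u,v)$ is a Hamiltonian cycle, so $d(P)\ge d(OHC)-d(u,v)$ with equality only for $OHC-(u,v)$. So the statement is equivalent to this: the $\frac{i(i-3)}{2}$ optimal paths with ordinary endpoints carry on average at most $\frac{(i-1)^2}{5(i-3)}$ ordinary edges. That is the whole content of the theorem, and you do not prove it.

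\textbf{The single-exchange claim fails.} It does not follow from optimality of 4-vertex sub-paths together with Theorem \ref{th001}. Take $K_5$ with
\[
d(1,2)=d(4,5)=1,\quad d(2,3)=d(3,4)=d(1,5)=2,
\]
\[
d(2,4)=d(3,5)=2,\quad d(2,5)=2.5,\quad d(1,3)=2.9,\quad d(1,4)=2.8.
\]
This is a metric. The unique $OHC$ is $(1,2,3,4,5)$ of length $8$, and every other Hamiltonian cycle has length at least $8.8$. All ten optimal paths are unique. Yet the optimal path with endpoints $1,3$ is $(1,2,4,5,3)$, of length $6$, with two ordinary edges $(2,4)$ and $(3,5)$. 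The single-exchange candidates $(1,2,5,4,3)$ and $(1,5,4,2,3)$ have lengths $6.5$ and $7$. Likewise the optimal path with endpoints $2,4$, namely $(2,1,3,5,4)$, has two ordinary edges. (The class averages in the statement still hold here; the example only shows that your key step is unavailable.)

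\textbf{Your 2-opt mechanism cannot exclude this.} $S_1<S_3$ only forbids a path from containing both reconnection edges $(A,C),(B,D)$ of a vertex-disjoint $OHC$ pair $(A,B),(C,D)$ in an orientation where the path 2-opt move turns them back into $(A,B),(C,D)$. Here $(2,4),(3,5)$ are exactly the reconnection edges of the pair $(2,3),(4,5)$. But the path traverses them as $2,4,5,3$, so the only available move gives $(2,5),(4,3)$, which is longer by $0.5$.

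\textbf{The margin step is also unsupported.} The $K_4$ averages $\frac{7}{3}$ versus $3$ average over all $K_4$s containing an edge, not over the windows of one fixed $OP^i$. They therefore say nothing about how many ordinary edges that particular path uses.

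\textbf{How the paper gets the factor $4$.} It works vertex by vertex, splitting the $(i-1)^2$ incidences at $v$ between the two $OHC$ edges and the $i-3$ ordinary edges.
\begin{itemize}
\item It derives $f(e)>2f(g)$ from the counts $2(i-2)$ versus $i-3$.
\item It pushes $p_k(e)>2p_k(g)$ down to $k=4$. There $e$ has frequency $3$ or $5$, so $g$ must have frequency $1$.
\item It argues that the comparisons for different ordinary edges at $v$ occur in different $K_4$s and therefore add up. This gives $f(e)>2\sum_{k=1}^{i-3}f_k(g)$, and hence at least $\frac{4}{5}(i-1)^2$ for the two $OHC$ edges.
\end{itemize}
That chain rests on the paper's equal-probability assumptions. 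Still, it is the step that produces the constant $\frac{1}{5}$, and your proposal has nothing in its place.

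Your single-exchange claim is essentially the conclusion of Theorem \ref{th22} (fewer than two ordinary edges per $OP^i$). The paper derives that from Theorem \ref{th01}, so you cannot borrow it here without circularity.
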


\begin{proof}
	Given a $K_i$ where $i\in[4,n]$, it contains ${i}\choose{2}$ $OP^i$s and one $OHC$. The $i-1$ edges containing a vertex $v$ are considered. The $i-1$ edges include two $OHC$ edges and $i-3$ ordinary edges. Firstly, we assume that the $i-1$ edges are uniformly contained in the $OP^i$s. Based on the ${{i}\choose{2}}$ $OP^i$s, the total frequency of the $i-1$ edges containing $v$ is $(i-1)^2$. Thus, each edge is contained in $i-1$ $OP^i$s on average. For the two $OHC$ edges containing $v$, each of them is contained in $i-1$ $OP^i$s in $OHC$, respectively. Besides the $i$ $OP^i$s inside the $OHC$, there are the other $\frac{i(i-3)}{2}$ $OP^i$s outside the $OHC$. Since the two $OHC$ edges are contained in the $i-1$ $OP^i$s inside the $OHC$, they will never be contained in any one of the other $\frac{i(i-3)}{2}$ $OP^i$s. 
	
	Given an $OP^i$, $v$ is either one endpoint or an intermediate vertex. As it is the endpoint, the $OP^i$ contains one edge containing $v$. Otherwise, the $OP^i$ includes one pair of edges containing $v$. Moreover, $v$ is the endpoint of $i-1$ $OP^i$s, and it is the intermediate vertex of ${{i-1}\choose{2}}$ $OP^i$s. According to the $OHC$, $v$ is the endpoint of two $OP^i$s and the intermediate vertex of $i-2$ $OP^i$s inside the $OHC$. Thus, $v$ is the endpoint of $i-3$ $OP^i$s and intermediate vertex of ${{i-2}\choose{2}}$ $OP^i$s outside the $OHC$. Since the $i-3$ ordinary edges are uniformly distributed in the $\frac{i(i-3)}{2}$ $OP^i$s outside the $OHC$, they have the same property to build these $OP^i$s whether $v$ is one endpoint or intermediate vertex. Thus, each pair of ordinary edges is contained in the same number of $OP^i$s in which $v$ is one intermediate vertex, respectively. There are ${{i-3}\choose{2}}$ pairs of ordinary edges containing $v$. If each pair of the ordinary edges is contained in more than one $OP^i$ where $v$ is one intermediate vertex, the total number of $OP^i$s will be bigger than ${{i}\choose{2}}$. Thus, each pair of the ordinary edges is contained in at most one such $OP^i$. As each pair of the ordinary edges is contained in one $OP^i$ in which $v$ is the intermediate vertex, there are total ${{i-3}\choose{2}}$ such $OP^i$s each of which includes two ordinary edges containing $v$.
	Besides these $OP^i$s, there are $i-3$ more $OP^i$s which may contain the $i-3$ ordinary edges. In each of the $i-3$ $OP^i$s, $v$ is one endpoint, and each of the $OP^i$s includes one ordinary edge containing $v$. Thus, the $i-3$ ordinary edges are contained in at most ${{i-2}\choose{2}}$ $OP^i$s without including the two $OHC$ edges. These $OP^i$s visit the $i-3$ ordinary edges $(i-3)^2$ times. Thus, each ordinary edge containing $v$ is contained in $i-3$ $OP^i$s rather than $i-1$ $OP^i$s on average. It indicates that each ordinary edge can not be contained in $i-1$ $OP^i$s due to the lack of the two $OHC$ edges containing $v$. 
	
	In addition, the ${{i}\choose{2}}$ $OP^i$s visiting $v$ can not be fully built if the two $OHC$ edges are not contained in some $OP^i$s outside the $OHC$. Besides the $i$ $OP^i$s in $OHC$ and  ${{i-2}\choose{2}}$ $OP^i$s only including the ordinary edges, the remainder $i-3$ $OP^i$s must include the two $OHC$ edges. Moreover, $v$ is one intermediate vertex in each of the $i-3$ $OP^i$s. On average, if the two $OHC$ edges are uniformly contained in the $i-3$ $OP^i$s while the $i-3$ ordinary edges also do, each $OHC$ edge is included in at least $\frac{3i-5}{2}$ $OP^i$s in the worst average case, whereas each ordinary edge is contained in at most $i-2$ $OP^i$s. It implies that an ordinary edge will be contained in smaller than $i-1$ $OP^i$s in the best average case. 
	
	In the worst average case, an $OHC$ edge is contained in the $OP^i$s with the bigger probability than an ordinary edge.  Because each ordinary edge is impossibly contained in $OHC$, we assume that the two $OHC$ edges and $i-3$ ordinary edges containing $v$ are uniformly contained in the $\frac{i(i-3)}{2}$ $OP^i$s excluding from $OHC$. In this case, an $OHC$ edge is contained in $2i - 4$ $OP^i$s in the worst average case, whereas an ordinary edge is contained in at most $i - 3$ $OP^i$s in the best average case. 
	
	In fact, the $i-3$ ordinary edges are not contained in the $\frac{i(i-3)}{2}$ $OP^i$s with the equal probability. Otherwise, the number of $OP^i$s will be bigger than ${{i}\choose{2}}$ if $i > 4$. If $i > 4$, most ordinary edges are contained in smaller than $i-3$ $OP^i$s, and some ordinary edges are contained in none of the $OP^i$s. It mentions that the total frequency of the $i-1$ edges containing $v$ is $(i-1)^2$ based on the ${{i}\choose{2}}$ $OP^i$s. Since the ordinary edges with a frequency smaller than $i-3$ cannot be used to construct the $OP^i$s at the assumed positions, they must be replaced by the $OHC$ edges and some other ordinary edges. In this case, the total frequency of the two $OHC$ edges will increase while that of the $i-3$ ordinary edges will decrease. Thus, an $OHC$ edge will be contained in greater than $2i-4$ $OP^i$s on average, whereas an ordinary edge will be contained in smaller than $i-3$ $OP^i$s in the average case. 
	
	There are $i$ $OHC$ edges and $\frac{i(i-3)}{2}$ ordinary edges in the $K_i$. Given an $OHC$ edge $e$ and ordinary edge $g$ containing $v$ in the  $K_i$, the $OP^i$s containing $e$ will be more than two times of those $OP^i$s containing $g$ because $\frac{2(i-2)}{i-3} > 2$ holds. It means that the frequency of $e$ is bigger than two times of that of $g$ in the frequency $K_i$, i.e., $f(e) > 2f(g)$. As the probability that $e$ or $g$ is contained in an $OP^i$ is considered, $p_i(e) > 2p_i(g)$ holds for $e$ and $g$ where $p_i(e)$ and $p_i(g)$ denote the probability that $e$ and $g$ are contained in an $OP^i$, respectively. 
	This result holds for an $OHC$ edge and an adjacent ordinary edge in the worst average case. 
	
	In the $K_i$, each edge is contained in $i-2$ $K_{i-1}$s each of which contains ${{i-1}\choose{2}}$ $OP^{i-1}$s. Each edge in the $K_i$ is contained in the specific number of $OP^{i-1}$s, respectively. Moreover, the $OP^i$s only contain the $OP^{i-1}$s in the $K_i$. Due to the variations of $TSP$, each $OP^{i-1}$ containing $v$ is contained in the $OP^i$s with the equal probability. Since $p_i(e) > 2p_i(g)$ exists based on the frequency $K_i$, $p_{i-1}(e) > 2p_{i-1}(g)$ also holds, i.e., the number of $OP^{i-1}$s containing $e$ will be more than two times of those containing $g$. If $g$ appears once in one $OP^{i-1}$, $e$ will appear more than twice in the $OP^{i-1}$s. In the same manner, $p_k(e) > 2p_k(g)$ holds where $(k\in[4,i])$, and the number of $OP^k$s including $e$ will be more than two times of those containing $g$ until $k = 4$. As $e$ and $g$ are contained in the $K_4$s, the number of $OP^4$s containing $e$ will be more than two times of those containing $g$. Because $p_4(e) > 2p_4(g)$ exists, $f(e) > 2f(g)$ also holds in the frequency $K_4$s containing $e$ and $g$. In a frequency $K_4$, the frequency of an edge is 1, 3 or 5, see the six frequency $K_4$s in Figure \ref{quadrilaterals}. Based on Theorem \ref{th001}, $e$ is the $OHC$ edge in any $K_4$ containing it. Thus, $f(e)$ takes the value 3 or 5 in these frequency $K_4$s. Because $f(e) > 2f(g)$, $f(g)$ must take the frequency 1 in the frequency $K_4$s containing $e$ and $g$. In this case, $e$ is the $OHC$ edge of these $K_4$s whereas $g$ is an ordinary edge. Since $g$ is arbitrary, it indicates that $g$ will be an ordinary edge in a $K_4$ where there is an $OHC$ edge in $K_i$. 
	
	Not all the $OP^k$s in each $K_k$ are contained in the $OP^i$s. In fact, only part of the $OP^k$s in a small number of $K_k$s are contained in the $OP^i$s. For example, the $OHC$ in the $K_i$ contains $i$ $OP^4$s, and each of the $OP^4$s is contained in one $K_4$ where there are six $OP^4$s, respectively. In the extreme case, all the $OP^4$s contained in the ${{i}\choose{2}}$ $OP^i$s are different. There are at most $\frac{i(i^2-6i+11)}{2}$ $OP^4$s. However, the $K_i$ includes $6{{i}\choose{4}}$ $OP^4$s. It indicates that some or most $OP^4$s are not contained in the $OP^i$s as $i \geq 5$. Moreover, some $OP^4$s are contained in a number of the $OP^i$s. Thus, the number of different $OP^4$s contained in the $OP^i$s will be much smaller than $\frac{i(i^2-6i+11)}{2}$. To approve $p_4(e) > 2p_4(g)$ and $p_i(e) > 2p_i(g)$, the $OP^4$s in the $K_4$s containing $e$ will be used to construct the $OP^i$s. Moreover, $e$ is the $OHC$ edge in each of these $K_4$s and it is contained in at least three $OP^4$s, respectively. If there is one ordinary edge $g$ in such a $K_4$, it is contained in only one $OP^4$.

	In most of the $K_4$s without  containing $e$, $f(e) = 0$ exists in the corresponding frequency $K_4$s. It does not meet the condition $p_4(e) > 2p_4(g)$. Thus, the $OP^4$s in these $K_4$s without including $e$ are seldom used to build the $OP^i$s. It means that the $OP^4$s containing more $OHC$ edges and less ordinary edges are used to build the $OP^i$s. The $OP^4$s in the $OP^i$s will contain the $OHC$ edges as most as possible due to  $p_i(e) > 2p_i(g)$.  Since an $OP^i$ only contains the $OP^4$s, each $OP^i$s will contain the $OHC$ edges as most as possible. Meanwhile, the $OP^k$s composed of the $OP^4$s also contain the $OHC$ edges as most as possible. Every pair of $e$ and $g$ is contained in $i-3$ $K_4$s in the $K_i$. Given such one $K_4$, if the $OP^4$ including $g$ in the $K_4$ is contained in one $OP^i$, the $OP^4$ and other $OP^4$s containing $e$ in the $K_4$ will be contained in more than two $OP^i$s in the average case because of $p(e\in OP^i) > 2p(g\in OP^i)$. In fact, as the $OP^4$s in the $K_4$ are contained in the $OP^i$s with the equal probability, there will be more than three $OP^i$s containing $e$ because there are at least three $OP^4$s containing $e$ in the $K_4$ where there is only one $OP^4$ contains $g$. 
	
	
	It mentions that there are two $OHC$ edges $e$ and $i-3$ ordinary edges $g$ containing $v$. If one $g$ is contained in an $OP^i$, each of the two $e$s will be contained in more than two $OP^i$s. Moreover, it is the $OP^4$s containing $e$ and $g$ in the corresponding $K_4$s that are also contained in the $OP^i$s. 
	Given two $g$s, they and $e$ are contained in two $K_4$s, respectively, because $f(g) = 1$ appears once in a frequency $K_4$. The $OP^4$s in the two $K_4$s are different from each other. As the $OP^4$s including each of the two $g$s are contained in the $OP^i$s,  the $OP^4$s including $e$ in the two $K_4$s are contained in the $OP^i$s with respect to the two $g$s, respectively. Thus, $p_i(e) > 2p_i(g)$ occurs independently for every pair of $e$ and $g$ contained in one $K_4$. It indicates that $f(e)$ will be bigger than two times of that of all adjacent ordinary edges $g$ contained in the $OP^i$s, i.e. $f(e) > 2\sum_{k=1}^{i-3}f_k(g)$ where $f_k(g)$ is the frequency of the $k^{th}$ ordinary edge $g$. 
	Since there are two $e$s containing $v$, the total frequency of the two $e$s containing $v$ is more than four times of that of the $i-3$ ordinary edges $g$. 
	
	Given another ordinary edge $g_1$ containing $v$, $g$ will replace $g_1$ for building the $OP^i$s if $g$ has the frequency $f(g) > i-3$ whereas $g_1$ has the smaller frequency $f(g_1) < i-3$ in the frequency $K_i$. As $g$ replaces $g_1$ for building the $OP^i$s, it is the $OP^{i-1}$s containing $g$ that replace the $OP^{i-1}$s containing $g_1$ in the $OP^i$s. 
	Since $p_{i-1}(e) > 2p_{i-1}(g)$ exists, the $OP^{i-1}$s containing $e$ will also replace those containing $g_1$ for building the $OP^i$s. Moreover, more than two times of the $OP^{i-1}$s containing $e$ will replace the $OP^{i-1}$s containing $g_1$ to build the $OP^i$s. Thus, $f(e) > 2f(g)$ still holds in the frequency $K_i$, and $p(e\in OP^i) > 2p(g\in OP^i)$ always holds no matter how $f(g)$ grows. As most ordinary edges are largely replaced by the $OHC$ edges and some other ordinary edges in the $OP^i$s, the frequencies of most ordinary edges will be very small, and the $OHC$ edges and some ordinary edges will have the big frequencies. Whatever the frequencies of the ordinary edges change, the total frequency is smaller than half frequency of each of the adjacent $OHC$ edges on average. 
	
	The total frequency of the $i-1$ edges containing $v$ is $(i-1)^2$. The total frequency of the two $OHC$ edges is at least $\frac{4(i-1)^2}{5}$, and each $OHC$ edge has the average frequency bigger than $f(e) = \frac{2(i-1)^2}{5}$. The total frequency of the $i-3$ ordinary edges is smaller than $\frac{(i-1)^2}{5}$, and the average frequency of an ordinary edge $g$ will be smaller than $f(g) = \frac{(i-1)^2}{5(n-3)}$. If there is only one ordinary edge $g$ with $f(g) > 0$, $g$ has the upper frequency bound $f(g) = \frac{(i-1)^2}{5}$. 
\end{proof}	

Theorem \ref{th01} implies that an $OHC$ edge of a $K_i$ is contained in more  $OP^i$s than an ordinary edge. In a $K_i$, the $OP^i$s contain the $OP^k$s ($k\in[4,i]$) having a smaller number of vertices. Since the edges are not uniformly distributed in the $OP^k$s, the $OHC$ edges and ordinary edges have different frequencies based on the $OP^k$s. On average, an $OHC$ edge $e$ is contained in the $OP^k$s with a bigger probability than an ordinary edge $g$ for $p_k(e) > 2p_k(g)$. Based on Theorem \ref{th01}, the $OP^k$s including more $OHC$ edges will be contained in the $OP^i$s with the bigger probability. Moreover, most $OP^k$s in the $OP^i$s are contained in the $K_k$s including the $OHC$ edges in the $K_i$. On the other hand, the $OP^k$s including more ordinary edges will be seldom contained in the $OP^i$s. %

After the frequency $K_i$ is computed with the ${{i}\choose{2}}$ $OP^i$s, the average frequency of all edges is $i-1$. The lower frequency bound for $OHC$ edges is concerned to separate $OHC$ edges from ordinary edges. Theorem \ref{th01} has given one lower frequency bound $2(i-2)$  for $OHC$ edges. However, this frequency bound is derived under the assumption that the ordinary edges are uniformly distributed in the $OP^i$s, and it is too small comparing to the upper frequency bound $\frac{(i-1)^2}{5}$ for ordinary edges. 
In the next, the bigger lower frequency bound for $OHC$ edges in a $K_i$ will be proven for the worst average case. Moreover, the change of the probability $p_k(e)$ for an $OHC$ edge in the $K_i$ will be analyzed in detail according to $k\in[4,i]$. 

\begin{theorem}
	\label{th002}
	 Given a $K_{i}$ containing one $OHC$ and ${{i}\choose{2}}$ $OP^i$s, the lower frequency bound of the $OHC$ edges $e$ is $\frac{1}{2}{{i}\choose{2}}$, and the probability $p_i(e) \geq \frac{1}{2}$ where $i\in [4,n]$. 
\end{theorem}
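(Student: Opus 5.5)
The plan is to argue by induction on $i$, with Theorem \ref{th001} as the base case, and to use the vertex-counting identity from Section 2 together with Theorem \ref{th01} to handle the inductive step. For $i=4$ the bound $\frac{1}{2}{{4}\choose{2}}=3$ is exactly the statement of Theorem \ref{th001}. In that setting the distance-sum inequality $S_1<S_3$, forced by optimality of the $OHC$, restricts the frequency $K_4$ to cases (a), (b) or (c) of Figure \ref{quadrilaterals}. In each of these cases the $OHC$ edge $(A,B)$ has frequency $3$ or $5$, so $p_4(e)\geq \frac{1}{2}$.

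For the inductive step I would fix a vertex $v$ of an $OHC$ edge $e=(v,u)$ in $K_i$ and use the identity that the $i-1$ edges at $v$ have total frequency $(i-1)^2$. Theorem \ref{th01} says that, in the average case, the two $OHC$ edges at $v$ take at least $\frac{4}{5}$ of this total. Hence $f(e)>\frac{2(i-1)^2}{5}$. A direct comparison gives $\frac{2(i-1)^2}{5}\geq \frac{i(i-1)}{4}=\frac{1}{2}{{i}\choose{2}}$ exactly when $8(i-1)\geq 5i$, that is, for every $i\geq 3$. So in the average regime of Theorem \ref{th01} the claimed bound follows immediately, and it is not tight there. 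Dividing by ${{i}\choose{2}}$ gives $p_i(e)\geq\frac{1}{2}$.

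To cover the "worst average case", where the uniformity assumption of Theorem \ref{th01} is dropped, I would use the structural fact from Section 2 that every $P^{i-1}$ contained in an $OP^i$ is an $OP^{i-1}$. Deleting one endpoint of an $OP^i$ yields an $OP^{i-1}$ in one of the $i-2$ subgraphs $K_{i-1}$ that contain $e$. The intended argument has three parts:
\begin{enumerate}
\item Classify the $OP^i$s containing $e$ by which $K_{i-1}$ they restrict to.
\item Apply the inductive hypothesis $f_{i-1}(e)\geq\frac{1}{2}{{i-1}\choose{2}}$ in each such $K_{i-1}$ in which $e$ is still an $OHC$ edge. Theorem \ref{th001} and the preservation argument following it give that $e$ remains an $OHC$ edge of the sub-$K_k$s containing it.
\item Average over the $i-2$ subgraphs and add the $i-1$ $OP^i$s inside the $OHC$ that contain $e$. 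Together these should yield at least $\frac{1}{2}\left({{i}\choose{2}}-(i-1)\right)+(i-1)\cdot\frac{1}{2}\cdot 2$, which is $\geq\frac{1}{2}{{i}\choose{2}}$.
\end{enumerate}

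The main obstacle is step 1. The map from $OP^i$s to $OP^{i-1}$s is neither injective nor surjective: an $OP^{i-1}$ containing $e$ need not extend to an $OP^i$ containing $e$. The transfer of the lower bound from level $i-1$ to level $i$ therefore needs an equal-probability assumption on which $OP^{i-1}$s are used, of the same kind made in the proof of Theorem \ref{th01}. I would first try to replace this assumption by a 2-$opt$ exchange argument. If an $OP^i$ with endpoints $x,y$ avoids $e$ while the $OHC$ uses $e$, swapping a pair of its edges for $e$ and an adjacent edge should not lengthen the path, and this would bound the number of $OP^i$s that avoid $e$ by $\frac{1}{2}{{i}\choose{2}}$. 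I expect this exchange step to be where the argument is most fragile.
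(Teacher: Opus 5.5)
There is a genuine gap, and it cannot be repaired, because the statement is false as a bound for an individual $K_i$, which is how both you and the paper read it.

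\textbf{Counterexample.} Take $i=n=5$ with Euclidean distances on $x_0=(0,0)$, $x_1=(1,0)$, $x_2=(2,0)$, $x_3=(3,\delta)$ and $h=(\frac32,10)$. Here $\delta>0$ is tiny and only breaks a tie in the sub-$K_4$ on $\{x_0,x_1,x_2,x_3\}$. Checking the six Hamiltonian paths for each endpoint pair gives the following, with all comparisons strict by a margin above $0.09$:
\begin{itemize}
\item every $OP^5$ is unique, and the unique $OHC$ is $(h,x_0,x_1,x_2,x_3,h)$;
\item the $OHC$ edge $e=(x_1,x_2)$ lies only in the four $OHC$ paths $(h,x_3,x_2,x_1,x_0)$, $(h,x_0,x_1,x_2,x_3)$, $(x_0,h,x_3,x_2,x_1)$ and $(x_3,h,x_0,x_1,x_2)$;
\item the other six $OP^5$s are $(h,x_3,x_2,x_0,x_1)$, $(h,x_0,x_1,x_3,x_2)$, $(x_0,x_1,h,x_2,x_3)$, $(x_0,x_1,h,x_3,x_2)$, $(x_1,x_0,h,x_2,x_3)$ and $(x_1,x_0,h,x_3,x_2)$, and none of them uses $e$.
\end{itemize}
Hence $f(e)=4<5=\frac12\binom52$ and $p_5(e)=\frac25$.

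\textbf{Where your plan breaks.} The same example shows where each part fails.
\begin{itemize}
\item \emph{Shortcut through Theorem~\ref{th01}.} That result is an average-case heuristic, not an instance-wise bound; here $4<\frac{2\cdot 4^2}{5}$. Even the per-vertex share would not give a per-edge bound. At $x_1$ the two $OHC$ edges have frequencies $9$ and $4$, whose sum $13$ exceeds $\frac45\cdot 16$, yet one of them is below $\frac12\binom52$.
\item \emph{Inductive route.} This is essentially the paper's own proof. The paper gets $p_{i+1}(e)\ge p_i(e)$ by \emph{assuming} that $OP^i$s inside and outside the $OHC$ are absorbed into $OP^{i+1}$s with fixed probabilities ($q=\frac1{i-1}$, $r\ge 2q$). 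It then descends to $p_4(e)\ge\frac12$. In the example $e$ has frequency $3$ in each of the three $K_4$s containing it. So $e$ is an $OHC$ edge of every sub-$K_4$ and $p_4(e)=\frac12$, yet $p_5(e)=\frac25$. Also, $e$ lies in none of the $\binom52-4=6$ $OP^5$s other than the four $OHC$ paths, whereas your step~3 needs it in half of them. So the inductive hypothesis plus ``preservation'' does not transfer upward, and no equal-probability assumption follows from the hypotheses.
\item \emph{2-opt fallback.} Optimality of the $OHC$ gives exchange inequalities only among Hamiltonian cycles, not among paths with prescribed endpoints. The optimal $x_0$--$x_3$ path $(x_0,x_1,h,x_2,x_3)$ has length $2+2\sqrt{100.25}\approx 22.02$. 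Every $x_0$--$x_3$ Hamiltonian path through $e$ has length at least $2+\sqrt{100.25}+\sqrt{102.25}\approx 22.12$.
\end{itemize}

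Only the case $i=4$ is genuinely provable: there the frequencies are $1,3,5$, and $OHC$ edges receive $3$ or $5$.
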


\begin{proof}
	
	For convenience of the proof, let $i:=i - 1$. Given a $K_{i+1}$ where $i\in [3, n-1]$, it contains $i+1$ $OHC$ edges and $i+1$ $K_i$s. Given an $OHC$ edge $e=(A,B)$ in the $K_{i+1}$, let $p_i(e)$ and $p_{i+1}(e)$ denote the probability that $e$ is contained in the optimal $i$-vertex paths and optimal $(i+1)$-vertex paths in the $K_i$s and $K_{i+1}$ containing $e$, respectively. The relationship between $p_{i+1}(e)$ and $p_i(e)$ will be analyzed to prove the lower frequency bound $\frac{1}{2}{{i+1}\choose{2}}$ according to the $OP^{i+1}$s in the $K_{i+1}$. 
	If $i=3$, $p_{i+1}(e)=\frac{1}{2}$ and $\frac{5}{6}$ based on the six frequency $K_4$s in Figure \ref{quadrilaterals}, i.e., $p_4(e)\geq 1/2$. 
	
	There are ${{i+1}\choose{2}}$ optimal $(i+1)$-vertex paths and one $OHC$ in the $K_{i+1}$. The $OHC$ contains $i+1$ optimal $(i+1)$-vertex paths and $i$-vertex paths, respectively. Comparing to the optimal $i$-vertex paths excluding from $OHC$ of the $K_{i+1}$, the optimal $i$-vertex paths in the $OHC$ are contained in the optimal $(i+1)$-vertex paths (in the $OHC$) with the biggest probability. In other words, each optimal $i$-vertex path in the $OHC$ is contained in two optimal $(i+1)$-vertex paths whereas an optimal $i$-vertex path outside the $OHC$ is contained in at most one optimal $(i+1)$-vertex path. Since $e$ appears in the optimal $i$-vertex paths and $(i+1)$-vertex paths lying both inside and outside the $OHC$, we also investigate the probability that $e$ belongs to optimal $i$-vertex paths and $(i+1)$-vertex paths outside the $OHC$. Let $q_i(e)$ and $q_{i+1}(e)$ denote the probability that $e$ is contained in the optimal $i$-vertex paths and optimal $(i+1)$-vertex paths excluding from the $OHC$, respectively. 
	
	It mentions that each optimal $(i+1)$-vertex path has one special pair of endpoints in the $K_{i+1}$. Due to the various distances of the edges in the $K_{i+1}$, any optimal $i$-vertex path outside the $OHC$ in the $K_{i+1}$ may be included in the optimal $(i+1)$-vertex paths outside the $OHC$. Thus, we assume that the optimal $i$-vertex paths outside the $OHC$ in the $K_{i+1}$ are included in the optimal $(i+1)$-vertex paths excluding from the $OHC$ with the equal probability. Moreover, each optimal $i$-vertex path is included in the optimal $(i+1)$-vertex paths in the $OHC$ of the $K_{i+1}$ with another equal probability. 
	
	Since $e$ is in the $OHC$ of the $K_{i+1}$, it is contained in $i$ optimal $(i+1)$-vertex paths in the $OHC$. In addition, $e$ is contained in $i-1$ $K_i$s each of which contains ${{i}\choose{2}}$ optimal $i$-vertex paths,  respectively. There are $i+1$ $K_i$s in the $K_{i+1}$. The $OHC$ of $K_{i+1}$ contains $i+1$ vertices. Delete one vertex from the $OHC$, an optimal $i$-vertex path is obtained. Moreover, the remaining $i$ vertices in the optimal $i$-vertex path are contained in one corresponding $K_i$. Thus, each $K_i$ in the $K_{i+1}$ contains one optimal $i$-vertex path in the $OHC$, and there are $i+1$ such optimal $i$-vertex paths. Moreover, each of the optimal $i$-vertex paths is contained in two optimal $(i+1)$-vertex paths in the $OHC$, respectively, because the $OHC$ is one closed cycle. 
	
	Besides the $i+1$ optimal $i$-vertex paths in the $OHC$, there are $(i+1)(i-2)$ optimal $i$-vertex
	paths contained in the other $\frac{(i+1)(i-2)}{2}$ optimal $(i+1)$-vertex paths in the $K_{i+1}$. It is known that each of the optimal $(i+1)$-vertex paths has one specified pair of endpoints, respectively. Moreover, each of the optimal $(i+1)$-vertex path contains only two optimal $i$-vertex paths in the $K_{i+1}$,  respectively. Given such an optimal $(i+1)$-vertex path, delete one endpoint from the optimal $(i+1)$-vertex path, an optimal $i$-vertex path will be obtained. The $i$ vertices in the optimal $i$-vertex path are also contained in one corresponding $K_i$ in the $K_{i+1}$. Moreover, the two optimal $i$-vertex paths in the optimal $(i+1)$-vertex path excluding from the $OHC$ are contained in two related $K_i$s in the $K_{i+1}$. The two related $K_i$s have two different vertices which are taken as the endpoints of the optimal
	$(i+1)$-vertex path. It says that the two optimal $i$-vertex paths in the two related $K_i$s have one common optimal $(i-1)$-vertex path in the $K_{i+1}$. Thus, any pair of related $K_i$s having a pair of different vertices contain two optimal $OP^i$s which are used to construct one $OP^{i+1}$ in the $K_{i+1}$. 
	
	As one vertex $v$ is taken as the endpoint of the optimal $(i+1)$-vertex paths, there are $i$ such optimal $(i+1)$-vertex paths. In the $OHC$, there are two optimal $i+1$-vertex paths where $v$ is one endpoint. The residual $i-2$ optimal $(i+1)$-vertex paths where $v$ is one endpoint will contain $i-2$ optimal $i$-vertex paths without $v$. Thus, the residual $i-2$ optimal
	$i$-vertex paths without $v$ are contained in the same $K_i$ which is generated from the $K_{i+1}$ by deleting $v$. Because there are $i+1$ pairs of $K_i$ and $v$ in the $K_{i+1}$, each of the $K_i$s includes $i-2$ optimal $i$-vertex paths contained in the optimal $(i+1)$-vertex paths excluding 	from the $OHC$, respectively. Plus the optimal $i$-vertex path in the $OHC$, each $K_i$ includes $i-1$ optimal $i$-vertex paths which are contained in the optimal $(i+1)$-vertex paths. 
	
	Each $K_i$ includes ${{i}\choose{2}}$ optimal $i$-vertex paths. Only $i-1$ of them are contained in the optimal $(i+1)$-vertex paths in the  $K_{i+1}$. In addition, each optimal $(i+1)$-vertex path is composed of two optimal $i$-vertex paths. Thus, it just requires half of the optimal $i$-vertex paths to compute all the optimal $(i+1)$-vertex paths. Given an optimal $i$-vertex path, the probability that it is contained in the optimal $(i+1)$-vertex paths is $\frac{\frac{i-1}{2}}{{{i}\choose{2}}} = \frac{1}{i}$ on average if the optimal $i$-vertex paths inside and outside the $OHC$ are not separately discussed.  Given an optimal $i$-vertex path, it is contained in the optimal $(i+1)$-vertex paths or not. Moreover, as it is contained in an optimal $(i+1)$-vertex path, the optimal $(i+1)$-vertex will be either in the $OHC$ of the $K_{i+1}$ or not. 
	
	$e=(A,B)$ is contained in $i-1$ $K_i$s where there are $i-1$ optimal $i$-vertex paths in the $OHC$ of the $K_{i+1}$, $(i-1)(i-2)$ optimal $i$-vertex paths in the other optimal $(i+1)$-vertex paths excluding from the $OHC$, and total $(i-1){{i}\choose{2}}$ optimal $i$-vertex paths. For the $K_i$ without vertex $A\in e$ and the other $K_i$ without vertex $B\in e$, they include two optimal $i$-vertex paths contained in the optimal $(i+1)$-vertex path where $A$ and $B$ are the endpoints. Because $e=(A,B)$ is in the $OHC$ and there is only one optimal $(i+1)$-vertex path with the endpoints $A$ and $B$ in the $K_{i+1}$, this optimal $(i+1)$-vertex path with endpoints $A$ and $B$ must be in the $OHC$. In addition, $B$ is one endpoint of two optimal $(i+1)$-vertex paths in the $OHC$. The other optimal $(i+1)$-vertex path is generated by the same optimal $i$-vertex path in the $K_i$ without $B$ through connecting $B$ to $A$ in case that $A$ becomes one intermediate vertex in the optimal $(i+1)$-vertex path. Note $C$ as the other endpoint of this optimal $i$-vertex path in the $K_i$ and it is noted as $OP^i=(A,...,C)$. The $K_i$ without $B$ also contains the $i-2$ optimal $i$-vertex paths which are inside the optimal $(i+1)$-vertex paths excluding from the $OHC$. Moreover, $B$ is one endpoint of these $i-2$ optimal $(i+1)$-vertex paths because an optimal $(i+1)$-vertex path is generated by one optimal $i$-vertex path adding the remaining vertex at one specified end. 
	
	As $B$ is one endpoint of the $i-2$ optimal $(i+1)$-vertex paths, the other endpoints are the vertices in the $K_{i+1}$ except $A$,  respectively. Delete $B$ from these optimal $(i+1)$-vertex paths, we will obtain $i-2$ optimal $i$-vertex paths in the $K_i$ without $B$. Each of the $i-2$ optimal $i$-vertex paths contains at least one ordinary edge. For avoid becoming shorter through edges replacement, an optimal $(i+1)$-vertex path will contain the biggest number
	of $OHC$ edges and the least number of ordinary edges. Thus, the optimal $i$-vertex paths where $A$ or $C$ is one endpoint will be preferentially used to construct the optimal $(i+1)$-vertex paths since one more $OHC$ edge $(A,B)$ or $(B,C)$ will be added to each of them, respectively. In the $K_i$ without $A$, $i-2$ optimal $i$-vertex paths are also selected to build the optimal $(i+1)$-vertex paths excluding from the $OHC$. If $A$ is adjacent to the other vertex $D$ in the $OHC$ of $K_{i+1}$, the optimal $i$-vertex paths with one endpoint $B$ and $D$ will be preferentially selected to construct the optimal $(i+1)$-vertex paths. 
	
	Except the two optimal $i$-vertex paths in the $OHC$ contained in the pair of $K_i$s without $A$ and $B$, respectively, the $i-1$ $K_i$s containing $e = (A,B)$  include all the optimal $i$-vertex paths which are contained in the optimal $(i+1)$-vertex paths excluding from the $OHC$, and the $i-1$ optimal $i$-vertex paths in the $OHC$. Since an optimal  $(i+1)$-vertex path is constructed by an optimal $i$-vertex path and one more vertex, it just requires the $\frac{(i+1)(i-2)}{2}$ optimal $i$-vertex paths in the $i-1$ $K_i$s to build the optimal $(i+1)$-vertex paths in the $K_{i+1}$. Moreover, any optimal $i$-vertex path excluding from the $OHC$ will be contained in the optimal $(i+1)$-vertex paths excluding from the $OHC$ with the equal probability. Thus, the probability
	that any given optimal $i$-vertex path outside the $OHC$ is one of the $\frac{(i+1)(i-2)}{2}$ 
	optimal $i$-vertex paths is computed as $q = \frac{(i+1)(i-2)/2}{(i-1){{i}\choose{2}} - (i-1)} =\frac{1}{i-1}$. 
	
	Based on the probability $q_i(e)$, the number of optimal $i$-vertex paths outside the $OHC$ while containing $e$ is computed as $M_i = (i-1)\left({{i}\choose{2}} - 1\right)q_i(e)$. Each of the $M_i$ optimal $i$-vertex paths will be contained in the optimal $(i+1)$-vertex paths excluding from the $OHC$. After the $i+1$ optimal $i$-vertex paths in the $OHC$ are subtracted, the probability that $e$ is contained in the optimal $(i+1)$-vertex paths excluding from the $OHC$ is computed as formula (\ref{eq3}). Since $q_{i+1}(e)=q_i(e)$, it indicates that the probability that $e$ is contained in the optimal $i$-vertex paths excluding from the $OHC$ remains unchanged from $i$ to $i+1$ where $i\in [4,n-1]$. As an ordinary edge in the $K_{i+1}$, $g = (A,C)$ also preserves the same  probability $q_i(e)$ from $i$ to $i+1$. 
	
	\begin{equation}
		q_{i+1} = \frac{M_iq}{{{i+1}\choose{2}}-(i+1)} = \frac{k-1}{k-1}\times q_i = q_i
		\label{eq3}
	\end{equation}
	
	In the following, the $i-1$ optimal $i$-vertex paths in the $OHC$ will be considered to predict the change of $p_i(e)$ and $p_i(g)$ from $i$ to $i+1$. Given an optimal $i$-vertex path in the $OHC$, the probability that it is contained in the optimal $(i+1)$-vertex paths is at least two times of that for an optimal $i$-vertex path outside the $OHC$. Let $r$ denotes the probability that an optimal $i$-vertex path in the $OHC$ is contained in the optimal $(i+1)$-vertex paths. $r \geq 2q = \frac{2}{i-1}$ holds. It means that an optimal $i$-vertex path in the $OHC$ is used to build more than two times of the optimal $(i+1)$-vertex paths comparing to an optimal $i$-vertex path outside the $OHC$. For an optimal $i$-vertex path contained in one $OP^{i+1}$ outside the $OHC$, $r = 2q$ holds. Otherwise, $r > 2q$ exists. 
	
	As an optimal $i$-vertex path is contained in an optimal $(i+1)$-vertex path, this optimal $(i+1)$-vertex path may be in the $OHC$ or not, and it has different probabilities to be inside and outside the $OHC$. In the $K_{i+1}$, there are $i+1$ optimal $i$-vertex paths in the $OHC$, and $(i+1)(i-2)$ optimal $i$-vertex paths in the optimal $(i+1)$-vertex paths outside the $OHC$. It requires half of the $(i+1)(i-2)$ optimal $i$-vertex paths to build all the optimal $(i+1)$-vertex paths outside the $OHC$. Thus, given an optimal $i$-vertex path contained in the optimal $(i+1)$-vertex paths, the probability that it is in the $OHC$ is $s = \frac{i+1}{{{i+1}\choose{2}}} = \frac{2}{i}$, and the probability that it is contained in an optimal $(i+1)$-vertex path outside the $OHC$ is $t = \frac{i-2}{i}$. Given an optimal $i$-vertex path containing $e$, this path may or may not lie within the $OHC$. The (comprehensive) probability that it is contained in the optimal $(i+1)$-vertex paths is $p = qt + rs = \frac{1}{i-1}\times \frac{i-2}{i} + \frac{2}{i-1}\times \frac{2}{i} = \frac{i+2}{i(i-1)} > q$. If $r\geq \frac{2}{i-1}$ is considered, $p \geq \frac{i+2}{i(i-1)}$ holds. 
	
	According to $p_i(e)$, the number of optimal $i$-vertex paths containing $e$ is computed as $N_i=(i-1){{i}\choose{2}}p_i(e)$. Each of them will be contained in the optimal $(i+1)$-vertex paths with the (comprehensive) probability $p > q$. In addition, there is one optimal $(i+1)$-vertex path where $A$ and $B$ are the endpoints. This optimal $(i+1)$-vertex path does not include $e = (A,B)$. Neither do the two optimal $i$-vertex paths embedded within it. As this optimal $(i+1)$-vertex path is neglected, the probability $p_{i+1}(e)$ is computed as formula (\ref{eq4}). $p_{i+1}(e) = p_i(e)$ indicates that $p_i(e)$ keeps the same value from $i$ to $i+1$. As $r \geq 2q$ is considered for $e\in OHC$, $p_{i+1}(e) \geq p_i(e)$ holds. It means that $p_i(e)$ increases or keeps the same value from $i$ to $i+1$. 
	
	\begin{equation}
		p_{i+1}(e) = \frac{N_ip}{{{i+1}\choose{2}} - 1} = p_i(e) 
		\label{eq4}
	\end{equation}
	
	If this optimal $(i+1)$-vertex path is considered, the probability $p_{i+1}(e) \geq \left(1 - \frac{2}{i(i+1)}\right)p_i(e)$ is computed. $p_{i+1}(e)$ is allowed to have a slight decrement from $i$ to $i+1$ if $p_i(e)$ becomes smaller. As $i$ is big enough, $\frac{2}{i(i+1)}$ 
	tends to zero. Meanwhile, $p_{i+1}(e) \geq p_i(e)$ holds, and $p_i(e)$ increases from $i$ to $i+1$. 
	
	Now, the change of $p_k(e)$ according to $k$ will be predicted for $e = (A,B)$ where $k\in [4, i]$ and $i\in [4,n-1]$. Given a $K_i$ containing $e$, the optimal $(i-1)$-vertex paths containing $e$ are included in the $OHC$ of the $K_i$ with the equal probability, and they are contained in the optimal $i$-vertex
	paths outside the $OHC$ in the $K_i$ with another equal probability. Thus, the formulae (\ref{eq3}) and (\ref{eq4}) work for $e$ from $i-1$ to $i$. It indicates that the probability $p_k(e)$ decreases or maintains the same value from $i$ to 4 where $k\in [4,i]$. Because $p_4(e)\geq \frac{1}{2}$ is proven based on Theorem \ref{th001}, it says that $p_k(e)\geq \frac{1}{2}$ holds according to $k$. It implies that $e$ is contained in more than half of the optimal $k$-vertex paths in a $K_k$ containing it on average. Because $p_{i+1} \geq p_i(e)$ as $k = i$, $p_{i+1}(e) \geq \frac{1}{2}$ also holds. Thus, the lower frequency bound of $e$ is $\frac{1}{2}{{i+1}\choose{2}}$ based on the ${{i+1}\choose{2}}$ $OP^{i+1}$s in the $K_{i+1}$. 
	
	As $i + 1 = n$, the probability $p_k(e) \geq \frac{1}{2}$ holds for $e\in OHC$ in $K_n$. Because $p_k(e)$ implies the average case according to the $K_k$s containing $e$, $e$ is contained in more than $\frac{1}{2}{{k}\choose{2}}$  optimal $k$-vertex paths in a $K_k$ containing $e$. 
	
	In the next, the relationship between $p_i(e)$ and $q_i(e)$ is given. Because $N_i=M_i + i-1$, the formula
	(\ref{eq5}) is derived for $q_i(e)\leq 1$. Thus, $p_{i+1}(e) = p_i(e) \geq q_{i+1}(e) = q_i(e)$ holds for $e\in OHC$ in the $K_{i+1}$. Moreover, as $i$ is big enough, $\frac{2}{i(i-1)}$ tends to zero, and $p_i(e) = q_i(e)$ appears. 
	
	\begin{equation}
		p_i(e) = \left[1 - \frac{2}{i(i-1)}\right]q_i(e) + \frac{2}{i(i-1)} \geq q_i(e)
		\label{eq5}
	\end{equation}
	
	For the ordinary edge $g = (A,C)$, it is not contained in the $OHC$ of $K_{i+1}$. Thus, it is not contained in the $OP^i$s and $OP^{i+1}$s in the $OHC$. The relationships between $p_{i+1}(g)$ and $p_i(g)$ is given as formula (\ref{eq6}). It indicates that the probability $p_i(g)$ decreases from $i$ to $i + 1$. Thus, the ordinary edges will reach the maximum probability $p_k(e)$ as $k$ is small. As $i$ is big enough, $p_{i+1}(e)\approx p_i(e)$ holds because $\frac{2}{i+1}$ tends to zero. It indicates that $p_i(e)$ decreases gradually according to $i$ for big $i$. \end{proof}	
	
	\begin{equation}
		p_{i+1}(g) = \frac{N_iq}{{{i+1}\choose{2}}} = \left[1 - \frac{2}{i+1}\right]p_i(g) < p_i(g)
		\label{eq6}
	\end{equation}

\begin{theorem}
	\label{th2}
	Given a $K_i$  containing ${{i}\choose{2}}$ $OP^i$s where $i\in[4,n]$, the frequency of an ordinary edge outside the $OHC$ is smaller than  $\frac{1}{2}{{i}\choose{2}}$ in
	the corresponding frequency $K_i$, and the expected frequency is smaller than $\frac{i+2}{2}$. 
\end{theorem}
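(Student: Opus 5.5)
The plan is a counting argument in the frequency $K_i$, combined with the lower bound for $OHC$ edges in Theorem \ref{th002} and the local structure around an ordinary edge.

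\textbf{Step 1 (vertex count).} Fix an ordinary edge $g=(v_1,v_j)$. As in Figure \ref{OHC}, $g$ is adjacent to the two $OHC$ edges $(v_1,v_2)$ and $(v_1,v_n)$ at $v_1$, and to two $OHC$ edges at $v_j$. As shown in Section 2, the $i-1$ edges at $v_1$ have total frequency exactly $(i-1)^2$. By Theorem \ref{th002}, each of the two $OHC$ edges at $v_1$ has frequency at least $\frac{1}{2}{{i}\choose{2}}$. Hence the $i-3$ ordinary edges at $v_1$ together have frequency at most
\[
(i-1)^2-{{i}\choose{2}}=\frac{(i-1)(i-2)}{2}.
\]
This bound alone gives $f(g)\leq \frac{(i-1)(i-2)}{2}$. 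That is not yet below $\frac{1}{2}{{i}\choose{2}}=\frac{i(i-1)}{4}$ for large $i$, so a second ingredient is needed.

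\textbf{Step 2 (path exclusion).} I will show that the $OP^i$s containing $g$ cannot contain both $OHC$ edges at $v_1$. Any $OP^i$ through $g$ with $v_1$ intermediate uses at most one of $(v_1,v_2)$, $(v_1,v_n)$, and paths with endpoint $v_1$ use only $g$ at $v_1$. By Theorem \ref{th002}, the two $OHC$ edges at $v_1$ lie in more than half of the ${{i}\choose{2}}$ paths each, so the paths containing both of them number at least $2\cdot\frac{1}{2}{{i}\choose{2}}-{{i}\choose{2}}\ge 0$. I will refine this using the $i$ paths inside the $OHC$, which contain both $OHC$ edges at $v_1$ except the two paths ending at $v_1$. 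That gives at least $i-2$ paths excluded from containing $g$, and so
\[
f(g)\le {{i}\choose{2}}-(\text{paths with } v_1 \text{ intermediate using both } OHC \text{ edges})-\#\{OP^i \text{ avoiding } v_1\text{'s ordinary edges}\}.
\]
Combining this with the symmetric count at $v_j$, i.e.\ summing the two inequalities and halving, should push the bound strictly below $\frac{1}{2}{{i}\choose{2}}$. The small cases $i=4,5$ I will check directly: for $i=4$, the frequency of an ordinary edge is $1<3$, from the six frequency $K_4$s in Figure \ref{quadrilaterals}.

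\textbf{Step 3 (expectation).} Here I average over the $i-3$ ordinary edges at a vertex. Their total frequency is at most $\frac{(i-1)(i-2)}{2}$ from Step 1, so on average $f(g)\le \frac{(i-1)(i-2)}{2(i-3)}$. To reach $\frac{i+2}{2}$, I will instead use the recursion (\ref{eq6}) from Theorem \ref{th002}, $p_{i+1}(g)=\left(1-\frac{2}{i+1}\right)p_i(g)$, starting from the $K_4$ value $p_4(g)=\frac{7}{18}$ established after Theorem \ref{th001}. The product telescopes to $p_i(g)=\frac{7}{18}\cdot\frac{20}{i(i-1)}$, so $f(g)=p_i(g){{i}\choose{2}}$ is bounded by a constant. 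That constant is certainly below $\frac{i+2}{2}$ for $i\ge 4$, and I will check $i=4$ directly ($\frac{7}{3}<3$).

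\textbf{Main obstacle.} I expect Step 2, the worst-case (non-average) bound, to be the hardest. Theorem \ref{th002} is stated as an average-case lower bound, so it does not directly give a pointwise $f(e)\ge\frac{1}{2}{{i}\choose{2}}$ inside every $K_i$. First I will try to argue through the $K_4$ bijection of Theorem \ref{th001}: every $K_4$ containing $g$ together with an adjacent $OHC$ edge gives $g$ frequency $1$ there. Then I will propagate this through the semigroup structure of the $OP^k$s.
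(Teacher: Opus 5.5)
\textbf{The gap is Step 2.} The inequalities you propose to combine there cannot reach the bound.

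\emph{What the exclusion gives.} At best it gives $f(g)\le\binom{i}{2}-i-1$: all $i$ $OP^i$s inside the $OHC$ miss $g$, and so does the $OP^i$ with endpoints $v_1,v_j$. Your displayed inequality also subtracts the first family twice, since it lies inside the second. This bound is below $\frac12\binom{i}{2}$ only for $i\le 5$, so it does settle $i=5$. The count at $v_j$ is identical to the one at $v_1$, so summing and halving gains nothing.

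\emph{Why no combination works.} For $i\ge 6$, take $f(e_1)=f(e_2)=\lceil\frac12\binom{i}{2}\rceil$ and $f(g)=\binom{i}{2}-i-1\ge\frac12\binom{i}{2}$ at both endpoints. These values satisfy the vertex identity $(i-1)^2$, the bound of Theorem \ref{th002} and the exclusion bound simultaneously. So no combination of these inequalities yields the claim. You need an input from optimality, not counting.

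\emph{What the paper does.} It uses counting only to show that at most three edges at $v$ can exceed $\frac12\binom{i}{2}$, since four would exceed $(i-1)^2$. It then rules out a third, ordinary edge $e_3$ beside the two $OHC$ edges. It descends to a frequency $K_4$ containing $e_1,e_2,e_3$: Theorem \ref{th002} for $e_1,e_2$, and (\ref{eq6}) and (\ref{eq4}) for $e_3$. In that $K_4$ the three edges at a vertex always carry frequencies $1,3,5$ (Figure \ref{quadrilaterals}), so one is below $3=\frac12\binom{4}{2}$. That descent itself rests on the paper's equal-probability recursions. Your closing paragraph points at the $K_4$ table, but only to make Theorem \ref{th002} pointwise. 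Using it to exclude a third heavy edge at a vertex is the missing idea.

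\textbf{The expected-frequency part.} You already had the paper's argument at the start of Step 3. Indeed $\frac{(i-1)(i-2)}{2(i-3)}\le\frac{i+2}{2}$ for $i\ge 4$, because $(i+2)(i-3)-(i-1)(i-2)=2i-8\ge 0$. Equality occurs only at $i=4$, where the ordinary edge has frequency $1$ anyway.

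You should drop the detour through (\ref{eq6}), for three reasons:
\begin{itemize}
\item The product is $\prod_{k=4}^{i-1}\frac{k-1}{k+1}=\frac{12}{i(i-1)}$, not $\frac{20}{i(i-1)}$. Your constant $\frac{35}{9}$ would actually exceed $\frac{i+2}{2}$ at $i=4,5$.
\item Iterating (\ref{eq6}) assumes $g$ stays ordinary in every intermediate $K_k$. The paper's proof explicitly does not assume this.
\item $p_4(g)=\frac{7}{18}$ is an average over the $K_4$s of $K_n$. It says nothing about the given frequency $K_i$.
\end{itemize}

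\textbf{Your worry about Theorem \ref{th002} is justified.} Take two triangles $\{1,2,3\}$ and $\{4,5,6\}$. Give internal edges length $1+\varepsilon_{uv}$ and cross edges length $100+\varepsilon_{uv}$, with tiny generic $\varepsilon$, except $d(1,4)=90$ and $d(3,6)=99$. The $OHC$ is $(1,4,5,6,3,2)$, yet the $OHC$ edge $(3,6)$ lies in only $7$ of the $15$ $OP^6$s, below $\frac12\binom{6}{2}=7.5$. So Step 1, like the paper's corresponding step, is an average-case heuristic rather than a pointwise bound.
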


\begin{proof}
	Given a vertex $v$ in the frequency $K_i$, the total frequency of the $i-1$ associated edges is $(i-1)^2$ based on the ${{i}\choose{2}}$ $OP^i$s. Thus, there are no four edges containing $v$ each of whose frequency is bigger than $\frac{1}{2}{{i}\choose{2}}$. Otherwise, the total frequency of the four edges is bigger than the total frequency $(i-1)^2$ of the $i-1$ edges containing $v$. 
	
	Provided that there are three edges $e_1$, $e_2$ and $e_3$ containing $v$ in the frequency $K_i$, each of their frequencies is bigger than $\frac{1}{2}{{i}\choose{2}}$. Moreover, $e_1$ and $e_2$ are the $OHC$ edges, and $e_3$ is one ordinary edge. Let $p_i(e_j) \geq \frac{1}{2}$ denote the probability that $e_j$ ($j$ = 1, 2, 3) is contained in the $OP^i$s in the $K_i$. When the $OP^{i-1}$s containing $v$ are used to compute the $OP^i$s by adding one more vertex, each of the $OP^{i-1}$s is contained in the $OP^i$s with the equal probability on average. Based on Theorem \ref{th002}, $e_1$ and $e_2$ will preserve the big probability $p_k(e_1) \geq \frac{1}{2}$ and $p_k(e_2) \geq \frac{1}{2}$ according to $k\in[4, i -1]$ based on the optimal $k$-vertex paths. Given a frequency $K_k$ containing $e_1$ and $e_2$, the frequency of $e_1$ and $e_2$ will be bigger than $\frac{1}{2}{{k}\choose{2}}$ on average. 
	
	Since $e_3$ is not the $OHC$ edge of the $K_i$, $p_i(e_3) = \left[1 - \frac{2}{i}\right]p_{i-1}(e_3)$ is computed based on formula (\ref{eq6}). It indicates that $p_i(e_3)$ increases from $i$ to $i-1$. Since $p_i(e_3) \geq \frac{1}{2}$, $p_{i-1}(e_3) \geq \frac{1}{2}$ holds. If $e_3$ becomes one $OHC$ edge in some $K_{i-1}$s, $p_{i-2}(e_3) = p_{i-1}(e_3) > p_i(e_3) \geq \frac{1}{2}$ can be derived based on the formula (\ref{eq4}) with respect to these frequency $K_{i-1}$s. If $e_3$ maintains the ordinary edge in the $K_{i-1}$s, $p_{i-2}(e_3) > p_{i-1}(e_3) \geq \frac{1}{2}$ holds based on formula (\ref{eq6}). For $e_3$, it may be one $OHC$ edge or an ordinary edge in each of the $K_k$s containing it as $k \in [4,i-1]$. Thus, $p_k(e_3)$ increases according to $k$ from $i$ to $4$ based on formulae (\ref{eq6}) and (\ref{eq4}). It indicates that $p_k(e_3) \geq \frac{1}{2}$ holds for $e_3$ according to $k$. Given a frequency $K_k$ containing $e_3$, $e_3$ has a frequency bigger than $\frac{1}{2}{{k}\choose{2}}$.  
	
	In the end, $e_1$, $e_2$ and $e_3$ are contained in one frequency $K_4$ where the probability $p_4(e_j) \geq \frac{1}{2}$ exists for each of them, respectively. However, the three adjacent edges must have the frequency 1, 3 and 5 in the frequency $K_4$, see the six frequency $K_4$s for a $K_4$ in Figure \ref{quadrilaterals}. There must be one edge having the frequency $1 < 3$. Thus, the three edges containing $v$ cannot have the probability $p_i(e) \geq \frac{1}{2}$ at the same time. Since $e_1$ and $e_2$ are the $OHC$ edges in the $K_i$, $p_i(e_1)\geq \frac{1}{2}$ and $p_i(e_2) \geq \frac{1}{2}$ hold based on Theorem \ref{th002}. Moreover, $e_3$ and the four adjacent $OHC$ edges of $K_i$ are contained in $4(i-3)$ frequency $K_4$s where $e_3$ has the frequency 1 with the probability $\frac{1}{2}$. It indicates $e_3$ will have the frequency 1 in many other frequency $K_4$s containing it. Thus, $p_i(e_3) < \frac{1}{2}$ exists. One sees $f(e)\geq \frac{1}{2}{{i}\choose{2}}$ and $p_i(e) \geq \frac{1}{2}$ is the sufficient and necessary conditions for an $OHC$ edge in a $K_i$ based on the frequency $K_i$.

	Because $f(e_1) \geq \frac{1}{2}{{i}\choose{2}}$ and $f(e_2) \geq \frac{1}{2}{{{i}\choose{2}}}$, the average frequency of the $i-3$ ordinary edges $e_3$ containing $v$ will be smaller than $i-3$ in the  frequency $K_i$. 
	As each of the two $OHC$ edges has the lowest frequency $f(e) = \frac{1}{2}{{i}\choose{2}}$,
	the average frequency of the $i-3$ ordinary edges is $\frac{1}{i-3}{{i-1}\choose{2}} \leq \frac{i+2}{2}$ if $i\geq 4$. If $e_3$ has each of the frequencies on the $i-3$ edges with the equal probability, the expected frequency of $e_3$ is smaller than $\frac{i+2}{2}$. 
	
	Thus, most ordinary edges will have a frequency smaller than $\frac{i+2}{2}$ in the frequency $K_i$. As $i$ becomes big, many ordinary edges have the frequency of 1 or zero in the frequency $K_i$. As they can not be used to build the $OP^i$s, these ordinary edges will be replaced by the $OHC$ edges and some other ordinary edges, see Theorem \ref{th01}. Thus, the frequency of an $OHC$ edge will increase and be bigger than $\frac{1}{2}{{i}\choose{2}}$ in the frequency $K_i$. 
\end{proof}
	Given one frequency $K_i$ ($i\in[4,n]$), the frequency of an $OHC$ edge is bigger than $\frac{1}{2}{{i}\choose{2}}$, whereas that of an ordinary edge is not. It indicates that $f(e) \geq \frac{1}{2}{{i}\choose{2}}$ is the sufficient and necessary conditions for $OHC$ edges in the $K_i$. Based on Theorem \ref{th002}, $p_k(e) \geq \frac{1}{2}$ holds for $e\in OHC$ of $K_i$ based on the frequency $K_k$s. It indicates that $e$ is contained in more than $\frac{1}{2}{{k}\choose{2}}$ $OP^k$s in a $K_k$ containing it. Thus, an $OHC$ edge of $K_i$ is also one $OHC$ edge of a $K_k$ containing it. 
	
	Moreover, the expected frequency for an $OHC$ edge in $K_i$ is much higher than that for an ordinary edge in the average case. 
\begin{theorem}
	\label{th22}
	Given a frequency $K_i$ $(i\in[4,n])$ containing ${{i}\choose{2}}$ $OP^i$s, the expected frequency for an $OHC$ edge is bigger than $\frac{i^2-4i+7}{2}$, whereas an ordinary edge has the expected frequency and maximum frequency smaller than 2 and $2(i-3)$, respectively. 
\end{theorem}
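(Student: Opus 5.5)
The plan is to work at a single vertex $v$ of the frequency $K_i$ and use the identity from Section 2: the $i-1$ edges at $v$ have total frequency $(i-1)^2$. These $i-1$ edges are the two $OHC$ edges $e_1,e_2$ and $i-3$ ordinary edges. Everything reduces to one quantity, the total frequency $T_g$ carried by the ordinary edges at $v$. If $T_g<2(i-3)$, then two consequences are immediate. First, $f(e_1)+f(e_2)=(i-1)^2-T_g>(i-1)^2-2(i-3)=i^2-4i+7$, so the expected frequency of an $OHC$ edge exceeds $\frac{i^2-4i+7}{2}$. Second, no single ordinary edge can carry more than the whole ordinary mass at its endpoint, so its maximum frequency is below $2(i-3)$. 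Hence the proof is organised around showing that the expected frequency of an ordinary edge is smaller than $2$.

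For that step I would track an ordinary edge $g$ through the chain $K_4\subset K_5\subset\cdots\subset K_i$, as in the proof of Theorem \ref{th002}.

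\begin{itemize}
\item The starting value comes from the discussion after Theorem \ref{th001}. In the average case an ordinary edge has $p_4(g)=\frac{7}{18}$, i.e. frequency $\frac{7}{3}$ in a frequency $K_4$.
\item Formula (\ref{eq6}) gives $p_{k+1}(g)=\left(1-\frac{2}{k+1}\right)p_k(g)$. Iterating, the product telescopes:
$\prod_{k=4}^{i-1}\frac{k-1}{k+1}=\frac{12}{i(i-1)}$.
\item Hence $p_i(g)=\frac{14}{3i(i-1)}$ and $f(g)=p_i(g){{i}\choose{2}}=\frac{7}{3}$, independent of $i$.
\end{itemize}

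This already shows the expected ordinary frequency is bounded by a constant. It is not yet below $2$, so a further reduction is needed.

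To push the constant below $2$, I would invoke the replacement mechanism from the proofs of Theorems \ref{th01} and \ref{th2}.

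\begin{itemize}
\item By Theorem \ref{th002}, both $OHC$ edges at $v$ already satisfy $f(e_j)\ge\frac{1}{2}{{i}\choose{2}}$.
\item By Theorem \ref{th01}, $f(e)>2\sum_k f_k(g)$, so the ordinary edges at $v$ jointly carry less than a quarter of the $OHC$ mass at $v$.
\item Ordinary edges of frequency $0$ or $1$ are replaced in the $OP^i$s by $OHC$ edges, which strictly lowers the average of the ordinary edges below the $\frac{7}{3}$ obtained from the $K_4$ chain.
\item I would also use the normalisation ${{i}\choose{2}}-1$ from the definition of the probability, which excludes the $OP^i$ whose endpoints are those of $g$. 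Combined with the strict decrease in (\ref{eq6}) for $i\ge5$, this is meant to give the strict bound $f(g)<2$.
\end{itemize}

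The main obstacle is exactly this last step: getting from the constant $\frac{7}{3}$ to something strictly below $2$. The telescoping computation alone does not do it. The gap of $\frac{1}{3}$ has to come from a quantitative version of the replacement argument, namely how much frequency the $OHC$ edges absorb from ordinary edges of frequency at most $1$. I would first try to make this precise by counting, at $v$, the ${{i-2}\choose{2}}$ $OP^i$s that avoid $e_1,e_2$ (as in the proof of Theorem \ref{th01}). I would then show that at most a $\frac{6}{7}$ fraction of the predicted ordinary incidences in these paths survives once the $OHC$ edges reach their lower bound $\frac{1}{2}{{i}\choose{2}}$.
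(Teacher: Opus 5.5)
\textbf{There is a genuine gap.} Your vertex-level reduction is correct. If the ordinary edges at $v$ carry total frequency $T_g<2(i-3)$, then $f(e_1)+f(e_2)=(i-1)^2-T_g>i^2-4i+7$, and no ordinary edge at $v$ reaches $2(i-3)$. The paper reaches the same numbers by counting over paths. Fewer than two ordinary edges in each of the $\frac{i(i-3)}{2}$ $OP^i$s outside the $OHC$ gives an ordinary total below $i(i-3)$, hence an average below $2$. It also gives an $OHC$ total above $\frac{i(i-1)^2}{2}-i(i-3)$, hence an average above $\frac{i^2-4i+7}{2}$. The problem is that $T_g<2(i-3)$ itself is never established, and your chain argument cannot establish it. Formula (\ref{eq6}) keeps $p_k(g){{k}\choose{2}}$ exactly constant: the decrease of $p$ is exactly offset by the growth of ${{k}\choose{2}}$. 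So the telescoping only says the frequency is invariant along the chain, and it returns whatever you feed in at $k=4$. For the same reason, renormalizing by ${{i}\choose{2}}-1$ changes $p_i(g)$, not the count $f(g)$. The value $\frac{7}{3}$ you feed in is also the wrong quantity. It is the paper's average, over all $K_4$s of $K_n$, for an ordinary edge of $K_n$, including $K_4$s in which that edge is an $OHC$ edge of the $K_4$. An ordinary edge of a given $K_4$ lies in the disjoint pair with the largest distance sum and has frequency exactly $1$ (Appendix \ref{app1}). At $i=4$ your formula outputs $\frac{7}{3}>2$, which contradicts the statement itself, so the chain is not tracking the quantity in the theorem.

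Your remaining bullets do not close the gap either. Each gives only a constant-factor domination of the ordinary mass at $v$, whereas the target needs a factor that grows linearly in $i$:
\begin{itemize}
\item Theorem \ref{th002} alone gives $T_g\le (i-1)^2-{{i}\choose{2}}=\frac{(i-1)(i-2)}{2}$, an ordinary average of about $\frac{i}{2}$.
\item The bound $f(e)>2T_g$ from Theorem \ref{th01} gives only $T_g<\frac{(i-1)^2}{5}$.
\item Deducing $T_g<2(i-3)$ from $f(e_1)+f(e_2)>cT_g$ requires $c+1\ge\frac{(i-1)^2}{2(i-3)}$, so $c$ must be of order $\frac{i}{2}$.
\end{itemize}
Your ``$\frac{6}{7}$ fraction'' is just the target restated.

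The missing idea is the paper's ratio step. Combine the lower bound $f(e)\ge\frac{1}{2}{{i}\choose{2}}$ of Theorem \ref{th002} with the upper bound $f(g)\le i-3$ of Theorem \ref{th01}. This gives $\frac{p_i(e)}{p_i(g)}\ge\frac{i(i-1)}{4(i-3)}>\frac{i+2}{4}$. The paper propagates this to the $OP^k$s and sums it over the ordinary edges at $v$ as in Theorem \ref{th01}. As a result, the two $OHC$ edges at $v$ carry more than $\frac{i+2}{2}$ times the ordinary mass. The paper then turns this into the claim that each $OP^i$ outside the $OHC$ contains fewer than two ordinary edges. That growing factor plugs straight into your reduction: for $i\ge5$ it gives $T_g<\frac{2(i-1)^2}{i+4}<2(i-3)$. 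Note also that your maximum-frequency conclusion needs $T_g<2(i-3)$ at the particular vertex. An expected ordinary frequency below $2$ is not enough.
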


\begin{proof}	
	Given a frequency $K_i$, $e$ and $g$ denote an $OHC$ edge and ordinary edge, respectively. Based on Theorems \ref{th002} and \ref{th2}, $e$ has the frequency $f(e)= \frac{1}{2}{{i}\choose{2}}$ in the worst average case. On the other hand, $g$ has the frequency $f(g) = i-3$ in the best average case according to Theorem \ref{th01}. Because there are ${{i}\choose{2}}$ $OP^i$s in the frequency $K_i$, the probability that $e$ and $g$ is contained in an $OP^i$ is $p_i(e)\geq \frac{1}{2}$ and $p_i(g)\leq \frac{i-3}{{{i}\choose{2}}}$, respectively. 
	Thus, $\frac{p_i(e)}{p_i(g)} \geq \frac{i(i-1)}{4(i-3)} > \frac{i+2}{4}$ holds. It indicates that $f(e) > \frac{i+2}{4}f(g)$ holds. 
	
	In the $K_{i-1}$s containing  $g$, $g$ will be the $OHC$ edge. As each $OP^{i-1}$ in the $K_i$ is contained in the $OP^i$s with the equal probability, $\frac{p_{i-1}(e)}{p_{i-1}(g)} > \frac{i+2}{4}$ holds based on Theorem \ref{th002}. It says that the number of $OP^{i-1}$s containing $e$ is more than $\frac{i+2}{4}$ times of that of $OP^{i-1}$s containing $g$. As the $OP^k$s ($k\in[4,i-1]$) containing $e$ and $g$ are considered, $\frac{p_k(e)}{p_k(g)} > \frac{i+2}{4}$ also holds.  If $i=4$, $\frac{p_4(e)}{p_4(g)} > \frac{3}{2}$ holds. It is known that $p_4(e) > 2p_4(g)$ based on Theorem \ref{th01}. Thus, $\frac{p_{i}(e\in OP^i)}{p_{i}(g\in OP^i)} > \frac{i+2}{4}$ is relaxed for small $i$. 
	
	We assume that the ordinary edges $g$ are contained in the $OP^i$s with the equal probability as well as the $OHC$ edges $e$ do. For a vertex $v$, if one $g$ containing $v$ appears once in an $OP^i$, each of the two adjacent $OHC$ edges $e$  will appear at least $\frac{i+2}{4}$ times in the $OP^i$s. Based on Theorem \ref{th01}, the total frequency of the two $OHC$ edges $e$ is bigger than $\frac{i+2}{2}$ times of that of all the $i-3$ ordinary edges $g$ containing $v$. In this case, if an $OP^i$ contains one ordinary edge $g$, the $OP^i$ will contain more than $\frac{i+2}{2}$ $OHC$ edges $e$ in $OHC$. 
	Because each $OP^i$ contains $i-1$ edges, it will contain more than $i-3$ $OHC$ edges $e$ and less than two ordinary edges $g$. 
	
	It is known that the $OHC$ includes $i$ $OP^i$s each of which contains none of the ordinary edges $g$. Thus, an $OHC$ edge $e$ is contained in more than $\frac{i^2-4i+7}{2}$ $OP^i$s on average. 
	On the other hand, the expected frequency $f(g) = \frac{2(i-1)^2}{(i+3)(i-2)}$ for an ordinary edge $g$ will tend to 2 as $i$ is big enough. If the frequency of the other ordinary edges containing $v$ is zero, the maximum frequency of $g$ will tends to $2(i-3)$. 
	
\end{proof}

\section{The method to identify the ordinary edges}
\label{sec5}

The dynamic programming algorithm for $TSP$ can be improved based on Theorem \ref{th002}. 
\begin{theorem}
\label{th003}
	Based on the inclusions between the $OP^i$s in $K_n$, the $OHC$ can be found in $O(n^6 2^{\frac{n}{2}})$ time using dynamic programming. 
\end{theorem}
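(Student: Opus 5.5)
The plan is to run a meet-in-the-middle variant of the Bellman--Held--Karp dynamic programming. The table is indexed by pairs (vertex set $S$, endpoints $u,w$) and stores the $OP^{|S|}$ on $S$ with endpoints $u,w$. By the inclusion property of Section~2, every $P^k$ inside an $OP^m$ is itself an $OP^k$. So an $OP^{k+1}$ with endpoints $u,w$ is obtained from some $OP^k$ on $S\setminus\{w\}$ with endpoints $u,x$ by appending the edge $(x,w)$. This is the usual Bellman recursion; the work is in limiting how many sets $S$ have to be tabulated.

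\textbf{Step 1: prune edges.} Compute the frequency of each edge from the frequency $K_4$s, in $O(n^4)$ time. By Theorems~\ref{th001}, \ref{th002} and \ref{th2}, an $OHC$ edge of $K_n$ is an $OHC$ edge of every $K_k$ containing it and has frequency at least $\frac{1}{2}{{k}\choose{2}}$ there. We therefore keep only edges passing this threshold and use them as the candidate set $E'$. We also restrict the recursion to $OP^{k}$s that Theorem~\ref{th002} allows to occur inside an $OP^{k+1}$. In a $K_{k}$ only $k-1$ of the ${{k}\choose{2}}$ $OP^k$s are ever extended, namely the one lying in the $OHC$ plus $k-2$ others. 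Each $K_k$ therefore contributes $O(k)$ useful entries rather than $O(k^2)$.

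\textbf{Step 2: meet in the middle.} Fix a vertex $v_1$ and cut the $OHC$ into two paths of $\lceil n/2\rceil+1$ vertices sharing their endpoints $u$ and $w$. Each half is an $OP^{\lceil n/2\rceil+1}$ with those endpoints. We tabulate optimal paths only up to size $n/2+1$, then combine them. For each pair $(u,w)$ and each half-set $S\ni u,w$, take the half stored for $S$ and the half stored for the complementary set $\overline{S}\cup\{u,w\}$, and minimise the total length. Each combination step costs $O(n^2)$ per stored entry. Together with the $O(n^2)$ endpoint pairs and $O(n^2)$ for the $K_4$ frequency lookups used in the pruning, this should give the polynomial factor $n^6$.

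\textbf{Step 3: the main obstacle.} We must bound the number of half-sets $S$ that survive by $O(2^{n/2})$. A naive count gives ${{n}\choose{n/2}}\approx 2^n/\sqrt n$, so the pruning has to do real work. I would first try to argue that, with the extension rule of Step~1, a surviving $OP^{k+1}$ on $S$ is determined by a surviving $OP^k$ plus one of at most two candidate extensions. These are the two $OHC$-neighbour choices singled out in the proof of Theorem~\ref{th002}, where paths ending at $A$ or $C$ are preferred because they add an $OHC$ edge. That gives a branching factor of $2$ per added vertex, and hence at most $O(2^{k})$ sets at depth $k$ from a fixed start. Stopping the recursion at depth $n/2$ then yields $O(n\,2^{n/2})$ stored entries, and multiplying by the combination cost gives $O(n^6 2^{n/2})$.

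The weak point is justifying the branching factor of two. It relies on the average-case statements of Theorems~\ref{th002} and \ref{th2}, which say that $OHC$ edges of $K_n$ stay $OHC$ edges in every sub-$K_k$. To turn this into a worst-case count, I would fall back on proving that every surviving state uses only edges of $E'$, and then bound by $2^{n/2}$ the number of ways to choose, at each vertex, one of its two retained candidate edges along a half-path.
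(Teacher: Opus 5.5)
Your route differs from the paper's, and Step~3, which carries all of its weight, does not go through. Meet-in-the-middle by itself saves nothing. Tabulating all sets of up to $\lceil n/2\rceil+1$ vertices still means at least $\sum_{k\le n/2}{{n}\choose{k}}\geq 2^{n-1}$ sets, so the whole $2^{n/2}$ must come from pruning. Your count of $O(2^{k})$ surviving sets at depth $k$ then needs two things: (i) $E'\supseteq OHC$, and (ii) every vertex keeps at most three edges of $E'$, so that a path can continue in at most two ways.

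Even granting Theorems~\ref{th001}--\ref{th2} as stated, which would give (i), nothing in the paper gives (ii). Section~\ref{sec5} says explicitly that some ordinary edges also reach $f(g)\geq\frac{1}{2}{{i}\choose{2}}$ for small $i$. It also says the residual graph keeps at each vertex its two $OHC$ edges plus \emph{a small number} of ordinary ones. With $d$ retained edges per vertex your branching factor is $d-1$, not $2$. If $d=2$, the $OHC$ would already be in hand and no exponential work would be needed.

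The two candidate extensions you borrow from the proof of Theorem~\ref{th002} are only a heuristic preference: paths ending at $A$ or $C$ are \emph{preferentially used}. They are also defined through the $OHC$ neighbours of $B$, which your algorithm does not know. Replacing them by the rule of extending only along $E'$ brings you back to (ii).

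Likewise, the count of $k-1$ extended $OP^k$s per $K_k$ is relative to one fixed $K_{k+1}\supset K_k$. In $K_n$ each $K_k$ lies in $n-k$ different $K_{k+1}$s, which may extend different $OP^k$s, so this does not shrink the table.

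Finally, (i) itself rests on the averaging and equal-probability assumptions in those proofs. A worst-case exact $O^*(2^{n/2})$ bound for arbitrary weights, which would beat Held--Karp, cannot be extracted from average-case statements. Your $n^6$ bookkeeping also multiplies in the one-off $O(n^4)$ pruning cost, which is only additive.

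For comparison, the paper never bounds a pruned search space in $K_n$. It argues from Theorem~\ref{th002} that edge frequencies peak at $i_0\approx\frac{n}{2}+2$, and that $OHC$ edges are separated from ordinary ones at $i=i_0+1$. It then samples a constant number $c$ of $K_{i_0+1}$s per edge (law of large numbers). On each it runs ordinary dynamic programming to obtain all ${{i_0+1}\choose{2}}$ $OP^{i_0+1}$s in $O(n^4 2^{n/2})$ time, and multiplying by the ${{n}\choose{2}}$ edges gives $O(c\,n^6 2^{n/2})$. There the $2^{n/2}$ is simply the size of the sub-instances, so the running time is a routine count. The entire burden sits on the frequency-separation claim at $i_0+1$; that claim, not a branching bound, is what you would need to support to follow the paper.
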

\begin{proof}
	Based on Theorem \ref{th002}, an $OHC$ edge $e$ will obtain the biggest frequency $f(e)$ at $i_0 = \frac{n}{2} + 2$ for even $n$ or $\frac{n+1}{2} + 1$ for odd $n$. After $i > i_0$, $f(e)$ will decrease according to $i$. Given an ordinary edge $g$, $f(g)$ will also obtain the biggest value of $f(g)$ at $i_0$. However, $f(g) < f(e)$ must hold at $i = i_0 + 1$. Otherwise, $f(g) \geq \frac{1}{2}{{n}\choose{2}}$ will appear at $i = n$ since we assume each $OP^i$ is contained in the $OP^{i+1}$s with the equal probability. Thus, it is necessary to compute the frequency of each edge at $i = i_0 + 1$, and compare them for finding the $OHC$. 
	
	Based on the law of large numbers in probability theory, one can choose a constant number $c\ll n$ of $K_{i_0+1}$s containing an edge to compute its frequency. As the dynamic programming algorithm is adopted, it requires $O(n^42^{\frac{n}{2}})$ time to compute the ${{i_0+1}\choose{2}}$ $OP^{i_0+1}$s in a $K_{i_0+1}$. The computation time to compute the frequencies of the ${{n}\choose{2}}$ edges in $K_n$ is $O(cn^62^{\frac{n}{2}})$. Since $c$ is a constant, it indicates that the computation time to resolve $TSP$ is $O(n^62^{\frac{n}{2}})$. 
\end{proof}
	
Although the dynamic programming algorithm for $TSP$ is improved, it is still not applicable as $n$ becomes big and large. In the following, we shall give an  applicable method to identify the ordinary edges. Based on Theorem \ref{th002}, the probability $p_i(e) \geq \frac{1}{2}$ and the average frequency $f(e) \geq \frac{1}{2}{{i}\choose{2}}$ hold for an edge $e\in OHC$ in $K_n$ based on the frequency $K_i$s where $i\in [4,n]$. It means that $e$ will have a frequency bigger than  $\frac{1}{2}{{i}\choose{2}}$ in a frequency $K_i$ containing it on average. Based on Theorem \ref{th2}, $e$ is also the $OHC$ edge of the $K_i$ containing it. Thus, given a $K_i$ containing $e$, $e$ is the $OHC$ edge of the $K_i$, $f(e)\geq \frac{1}{2}{{i}\choose{2}}$ and $p_i(e) \geq \frac{1}{2}$ hold based on the $OP^i$s in the $K_i$. 

For most ordinary edges $g$ in $K_n$, $p_i(g) < \frac{1}{2}$  and $f(g) < \frac{1}{2}{{i}\choose{2}}$ exist based on the frequency $K_i$s although $i$ is a small number. These ordinary edges can be identified based on the frequency $K_i$s in the polynomial computation time. For some special ordinary edges $g$ in $K_n$, $p_i(g) \geq \frac{1}{2}$ and $f(g) \geq \frac{1}{2}{{i}\choose{2}}$ also appear as $i$ is small, such as $i\ll n$. It is difficult to distinguish these ordinary edges from $OHC$ edges by edge frequency and probability based on the frequency $K_i$s for small number $i$. The changes of the average frequency and probability for the $OHC$ edges and ordinary edges are demonstrated in paper \cite{DBLP:journals/Wang26}. Here we give another method to identify the ordinary edges for saving computation time. 

An ordinary edge $g$ is contained in ${{n-2}\choose{i-2}}$ $K_i$s in $K_n$. These $K_i$s are classified into several kinds according to the number of $OHC$ edges in $K_n$ adjacent to $g$. (a) There is none of the $OHC$ edges of $K_n$ in the $K_i$. In the $K_i$, $g$ may be one $OHC$ edge of the $K_i$. (b) There is one $OHC$ edge of $K_n$ adjacent to $g$ in the $K_i$. $g$ may also be one $OHC$ edge in the $K_i$. (c) There are two $OHC$ edges in $K_n$ adjacent to $g$ in the $K_i$. The two $OHC$ edges of $K_n$ are adjacent to $g$ on one endpoint or on both endpoints. If the two $OHC$ edges are adjacent to $g$ on both endpoints, each $OHC$ edge is adjacent to $g$ on one endpoint, respectively. In  this case, $g$ may also be one $OHC$ edge of the $K_i$. Otherwise, $g$ will be an ordinary edge in the $K_i$. (d) There are three or four $OHC$ edges in $K_n$ adjacent to $g$ in the $K_i$. $g$ is an ordinary edge in the $K_i$.

Given a $K_i$ containing $g$ and two $OHC$ edges $e_1$ and $e_2$  of $K_n$ adjacent to $g$ on the same vertex, $e_1$ and $e_2$ will be two $OHC$ edges of the $K_i$ based on Theorems \ref{th002} and \ref{th2}. Since there is only one $OHC$ in the $K_i$ and there are only two $OHC$ edges associated with each vertex, $g$ must be one ordinary edge of the $K_i$. In this case, $g$ will have a frequency smaller than $\frac{1}{2}{{i}\choose{2}}$ based on the ${i}\choose{2}$ $OP^i$s in the $K_i$. 

In Figure \ref{OHC}, $g$ is adjacent to two pairs of the $OHC$ edges in $K_n$ where each pair of the $OHC$ edges is adjacent to $g$ on one endpoint, respectively. In $K_n$, $g$ and each pair of the $OHC$ edges is contained in ${{n-4}\choose{i-4}}$ $K_i$s. In each of the $K_i$s, $g$ is an ordinary edge, and it is contained in smaller than $\frac{1}{2}{{i}\choose{2}}$ $OP^i$s. Since $g$ contains two vertices, there are total $K = 2{{n-4}\choose{i-4}} - {{n-6}\choose{i-6}}$ frequency $K_i$s where $f(g) < \frac{1}{2}{{i}\choose{2}}$ and $p_i(g) < \frac{1}{2}$ exist. In each of the other frequency $K_i$s, $g$ may be one $OHC$ edge or an ordinary edge. In the extreme case, $g$ is an $OHC$ edge in each of the ${{n-2}\choose{i-2}} - K$ $K_i$s. In this case, the smallest percentage of the frequency $K_i$s where $f(g) < \frac{1}{2}{{i}\choose{2}}$ and $p_i(e) < \frac{1}{2}$ is computed as $r_i \in [0,1] $ in formula(\ref{eq7}). One sees that $r_i$ is a monotone function from $i = 6$ to $n$. Thus, the percentage of frequency $K_i$s where $g$ has a probability $p_i(e) < \frac{1}{2}$ monotone increases according to $i$. However, $p_i(e) \geq \frac{1}{2}$ for an $OHC$ edge $e$ increases or keeps the nearly equal value according to $i\in[4,n]$. It indicates that the percentage of frequency $K_i$s where $p_i(e) < \frac{1}{2}$ monotone decreases according to $i\in [6,n]$. Thus, the ordinary edges can be identified according to the $r_i$ computed in formula (\ref{eq7}). If $i = 4$, such an ordinary edge $g$ will be contained in more than $2(n-4)$ frequency $K_4$s where $f(g) = 1$ based on Theorem \ref{th001}. 

\begin{equation}
	r_i = \frac{K}{{{n-2}\choose{i-2}}} = \frac{2(i-2)(i-3)}{(n-2)(n-3)} - \frac{(i-2)(i-3)(i-4)(i-5)}{(n-2)(n-3)(n-4)(n-5)}
	\label{eq7}
\end{equation}

In Figure \ref{OHC}, if $v_2 = v_i$ or $v_k = v_n$, there are $n$ such ordinary edges $g = (v_1, v_j )$ in $K_n$. For such an ordinary edge $g$, $K = 2{{n-4}\choose{i-4}} - {{n-5}\choose{i-5}}$, and the smallest probability $r_i$ for $g$ is computed as formula (\ref{eq8}) which also monotone increases according to $i\in [5, n]$. If $i = 4$, such an ordinary edge $g$ is contained in more than $2(n-4)$ frequency $K_4$s where $f(g) = 1$ based on Theorem \ref{th001}.

\begin{equation}
	r_i = \frac{2(i-2)(i-3)}{(n-2)(n-3)} - \frac{(i-2)(i-3)(i-4)}{(n-2)(n-3)(n-4)}
	\label{eq8}
\end{equation}

Given an edge $g$ in $K_n$, there are several steps to compute the $r_i$ for identifying the ordinary edges. Step1: Choose a set of $M$ $K_i$s containing $g$. Step 2: Convert the $M$ $K_i$s into the frequency $K_i$s. Step 3: Compute the number of frequency $K_i$s where $f(g) < \frac{1}{2}{{i}\choose{2}}$ with respect to the $M$ frequency $K_i$s, and denote the number as  $M_i$.  Step 4: Compute the evaluation index $r_i$, i.e.,   $r_i = \frac{M_i}{M}$. Step 5: Let $i:= i + 1$, compute the $r_{i+1}$ according to the procedure in Step 1 $\sim$ 4. Step 6: If $r_i < r_{i+1}$, $g$ is one ordinary edge; Otherwise, it is not. 

If all $K_i$s and $K_{i+1}$s in $K_n$ are used to compute the $r_i$ and $r_{i+1}$ to find the ordinary edges, it will consume $O(n^i)$ computation time if $i$ is fixed. If $i$ is relatively big, it becomes impractical in real-world applications, especially for $TSP$ of big and large scale. In fact, most ordinary edges can be found based on Theorem \ref{th001}. If the average frequency of each edge is computed with the frequency $K_4$s, the edges with an average frequency smaller than 3 are ordinary edges. One can use the frequency $K_i$s for $i >4$ to find more ordinary edges with an average frequency smaller than $\frac{1}{2}{{i}\choose{2}}$ based on Theorem \ref{th002}. After many ordinary edges are found based on Theorems \ref{th001} and \ref{th002}, the residual ordinary edges will be identified using the index $r_i$. Note the graph containing the residual edges as $G=(V,E_r)$ where $E_r$ includes the $OHC$ edges and ordinary edges with an average frequency bigger than $\frac{1}{2}{{i}\choose{2}}$. The residual graph $G$ can be computed in $O(n^2log_2n)$ time, see \cite{DBLP:journals/Wang16}.

In real-life applications, only comparing two percentages $r_i$ and $r_{i+1}$ for an edge may not work well to find some ordinary edges $g$. The first reason comes from the random $K_i$s chosen for $g$. In some experiments, the values of $r_i$ and $r_{i+1}$ may deviate from the expected values although this occurrence  probability is very small. If this case happens, $r_i \geq r_{i+1}$ may appear in the experiments. Thus, it is better to do several experiments for evaluation.  Secondly, the number of adjacent $OHC$ edges is much smaller than that of the adjacent ordinary edges for $g$. If $i$ is small, there will be no $OHC$ edges in most of the selected $K_i$s. In this case, $g$ will maintain the big $r_i$ and it does not decrease from $i$ to $i+1$. If let $r_i < r_{i+1}$, we have to use more $K_i$s containing one or two pairs of the adjacent $OHC$ edges for $g$. The $K_i$s constructed by the edges in the residual graph $G = (V, E_r)$ just meet the requirements. In the residual graph $G$, each vertex is adjacent to two $OHC$ edges and a small number of ordinary edges. Even though the edges are selected at random to build the $K_i$s for $g$, one or two pairs of the adjacent $OHC$ edges have more chance to be selected for constructing the $K_i$s for $g$. Thus, $r_i$ will become bigger based on these frequency $K_i$s and $K_{i+1}$s. In addition, the bigger the $i$ is, the better the index $r_i$ works well to identify the ordinary edges $g$. 

\section{Conclusion}
\label{sec6}
The frequency $K_i$s ($i\in [4,n]$) are studied for characterizing the structures of the $OHC$ edges for symmetric $TSP$. $OHC$ edges illustrate the much higher frequency and probability than that of ordinary edges computed with the frequency $K_i$s. Firstly, an $OHC$ edge in $K_i$ has the frequency bigger than $\frac{1}{2}{{i}\choose{2}}$ in the  frequency $K_i$, whereas an ordinary edge has the frequency smaller than $\frac{1}{2}{{i}\choose{2}}$. On average, the expected frequency of an $OHC$ edge is bigger than $\frac{i^2-4i+7}{2}$ whereas an ordinary edge has the expected frequency smaller than 2 in the  frequency $K_i$. Secondly, as the frequency of each edge in $K_n$ is computed with the frequency $K_i$s, an $OHC$ edge in $K_n$ has the average frequency bigger than $\frac{1}{2}{{i}\choose{2}}$. It indicates that an $OHC$ edge in $K_n$ is the $OHC$ edge of a $K_i$ containing it on average. 

Based on the frequency $K_i$s, the probability that an $OHC$ edge has the frequency bigger than $\frac{1}{2}{{i}\choose{2}}$  increases according to $i\in[4,n]$. On the other hand, the probability that an ordinary edge has the frequency smaller than $\frac{1}{2}{{i}\choose{2}}$  increases according to $i\in[4,n]$. Based on the findings, a method to identify the ordinary edges is provided. This method is also discussed with respect to real-world applications. In the future, we shall design the algorithm for resolving $TSP$ based on the method .  

\section{Acknowledgements}
\label{sec7}
The author wants to thank the Fundamental Research funds for the Central Universities (No.2024JC006). The work benefits from the State Key Laboratory of Alterate Electrical Power System with Renewable Energy Sources (NCEPU).

\section{Declaration of competing interests}
\label{sec8}
The author declared that they had no affiliations with or involvement in any organization or entity with any financial interest in the subject matter or materials discussed in the manuscript. 
\section{Declaration of generative AI in scientific writing}
\label{sec9}
The author declared that there were no use of generative AI and AI-assisted technologies in scientific writing in the manuscript. 
\appendix
\section{Six frequency $K_4$s for a $K_4$}
\label{app1}

The six frequency $K_4$s for a $K_4$ are computed as follows.
Given a $K_4$ on four vertices $A$, $B$, $C$ and $D$, the layout of the four vertices is illustrated as that in Fig. \ref{quadrilaterals} (a). There are six edges $(A,B)$, $(A,C)$, $(A,D)$, $(B,C)$, $(B,D)$, and $(C,D)$ in the $K_4 = ABCD$. The distances of the six edges are denoted as $d(A,B)$, $d(A,C)$, $d(A,D)$, $d(B,C)$, $d(B,D)$ and $d(C,D)$, respectively. The six $OP^4$s in $K_4$ are up to the ordering of the three distance sums $S_1 = d(A,B)+d(C,D)$,  $S_2 = d(A,D)+d(B,C)$ and $S_3 = d(A,C)+d(B,D)$. For example, if $S_1 < S_2 < S_3$ is considered,
the six $OP^4$s are computed as $(A,D,C,B)$, $(A,B,D,C)$, $(A,B,C,D)$, $(B,A,D,C)$, $(B,A,C,D)$ and $(C,B,A,D)$. We enumerate the number of $OP^4$s containing an edge, and this number represents the frequency of the edge. For example, $(A,B)$ is contained in five of the $OP^4$s so it has the frequency of 5. As the frequency of each edge is computed with the six $OP^4$s, the frequency $K_4$ is illustrated in Figure \ref{quadrilaterals} (a). The inequality related to the three distance sums is given below the frequency $K_4$. 
Since the three distance sums $S_1$, $S_2$ and $S_3$ have the other five orderings, one can derive the corresponding set of   $OP^4$s and frequency $K_4$s. The other five frequency $K_4$s, and each ordering of the distance sums
are illustrated in Figure \ref{quadrilaterals} (b) $\thicksim$ (f), respectively.

\begin{figure}
	\centering
	\includegraphics[width=4.0in,bb=0 0 600 350]{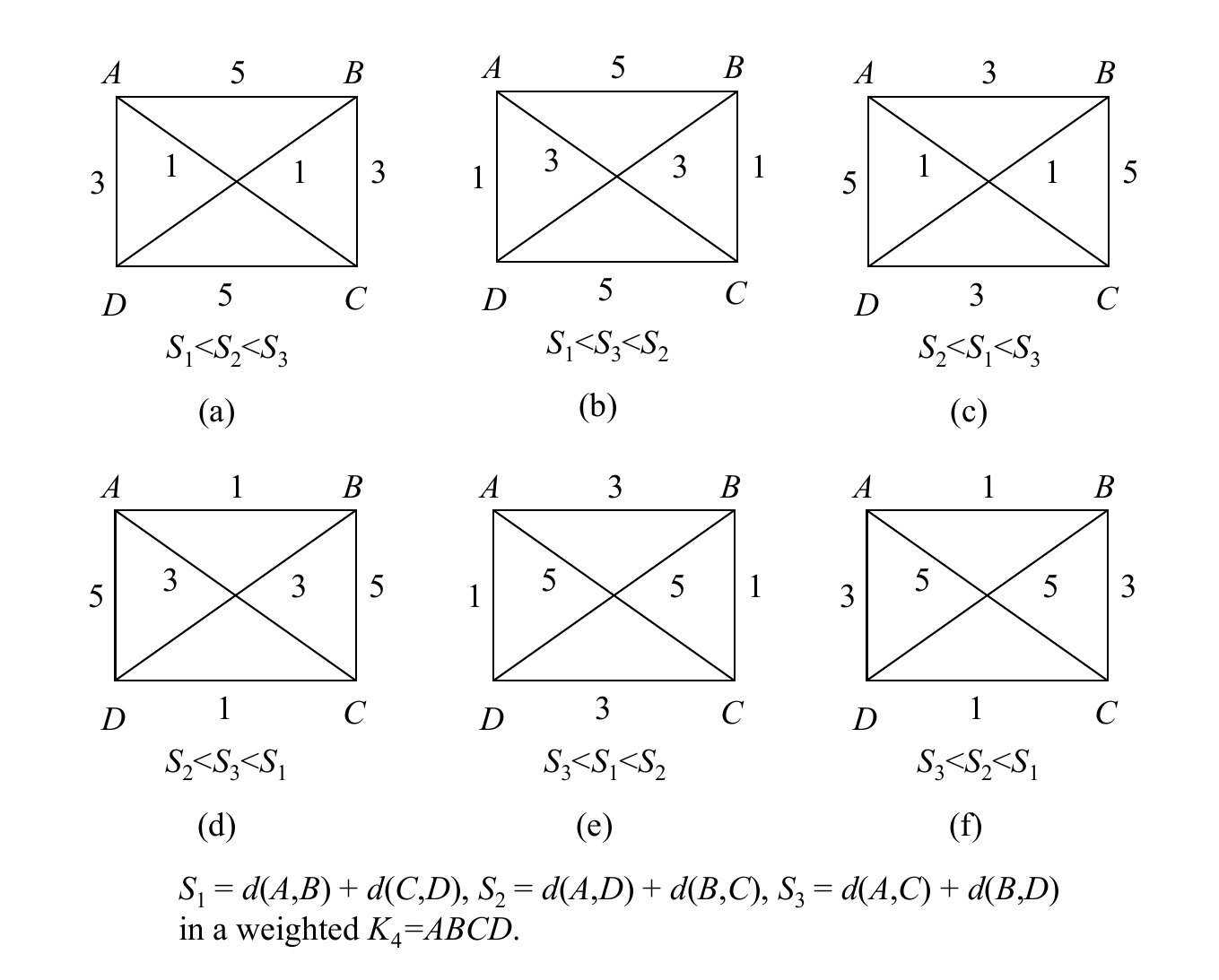}
	\caption{The six frequency $K_4$s for a weighted $K_4=ABCD$.}
	\label{quadrilaterals}
\end{figure}

In each frequency $K_4$, the frequency
of an edge is 1, 3 or 5. It implies that the edge is contained in 1, 3 or 5 $OP^4$s. Moreover, the frequency of $OHC$ edges in $K_4$ is either 3 or 5. For example, the $OHC$ edge $(A,B)$ has the frequency of 3 or 5 in (a), (b), (c), and (e). This means that the minimum frequency of $OHC$ edges in $K_4$ is 3. However, the expected frequency for an $OHC$ edge will be bigger than 3. Given an $OHC$ edge, such as $(A,B)$, it has the frequency of 3 in (c) and (e), and the frequency of 5 in (a) and (b). Thus, the expected frequency for $(A,B)$ is 4 with respect to the four frequency $K_4$s. Since there are six $OP^4$s in $ABCD$, the probability that $(A,B)$ is contained in an $OP^4$ in $ABCD$ is $\frac{2}{3}$. In addition, the two vertex-disjoint edges, such as $(A,B)$ vs $(C,D)$, $(A,C)$ vs $(B,D)$ and $(A,D)$ vs $(B,C)$, have the same frequency in a  frequency $K_4$. On the other hand, the three edges containing a vertex have three  different frequencies 1, 3 and 5, respectively.

\end{document}